\documentclass{amsart}
\usepackage{amsmath,amssymb,amsthm}

\newtheorem{theorem}{Theorem}[section]
\newtheorem{lemma}[theorem]{Lemma}
\newtheorem{proposition}[theorem]{Proposition}

\theoremstyle{definition}

\theoremstyle{remark}
\newtheorem{remark}[theorem]{Remark}
\newtheorem*{Acknowledgments}{Acknowledgments}

\numberwithin{equation}{section}


\begin{document}

\title[Factorizations of EP operators]{Factorizations of EP operators}

\author{Dimosthenis Drivaliaris}
\address{Department of Financial and Management Engineering\\
University of the Aegean
31, Fostini Str.\\
82100 Chios\\
Greece} \email{d.drivaliaris@fme.aegean.gr}

\author{Sotirios Karanasios}
\address{Department of Mathematics\\
National Technical University of Athens\\
Zografou Campus\\
15780 Zografou\\
Greece} \email{skaran@math.ntua.gr}

\author{Dimitrios Pappas}
\address{Department of Statistics and Insurance Science\\
University of Piraeus\\
80 Karaoli \& Dimitriou Street\\
18534 Piraeus\\
Greece} \email{dimpap@unipi.gr}

\date{}

\begin{abstract}
In this paper we characterize EP operators through the existence of different types of factorizations. Our results extend to EP operators existing characterizations for EP matrices and give new characterizations both for EP matrices and EP operators.
\end{abstract}

\subjclass[2000]{Primary 47A05, 15A09; Secondary 47B}

\keywords {EP operator, EP matrix, range hermitian, factorization, generalized inverse, Moore-Penrose inverse.}

\maketitle

\section{Introduction}

A square matrix $T$ is called EP (or range Hermitian) if $\mathcal{N}(T)=\mathcal{N}(T^*)$. EP matrices were introduced by Schwerdtfeger in \cite{Schwerdtfeger}. Ever since many authors have studied EP matrices with entries from $\mathbb{C}$ or from an arbitrary field (for more on EP matrices see \cite[Chapter 4.4]{Israel} and \cite[Chapter 4]{Campbell1}). The notion of EP matrices was extended by Campbell and Meyer to operators with closed range on a Hilbert space in \cite{Campbell}. For further results on EP operators see \cite{Brock, Djordjevic1, Djordjevic2, Djordjevic3, Fildan, Lesnjak, Spitkovsky}.

One of the main reasons for studying EP matrices is that, as it was proved by Pearl in \cite{Pearl3}, a matrix $T$ is EP if and only if it commutes with its Moore-Penrose inverse $T^{\dag}$. Using that as a definition the notion of EP matrices was extended to elements of $C^*$-algebras in \cite{Harte, Koliha1, Koliha3}, to elements of rings with involution in \cite{Hartwig2, Hartwig3}, to elements of semigroups with involution in \cite{Hartwig1} and to elements of unital Banach algebras and operators on a Banach space in \cite{Boasso}.

One of the main trends in the study of EP matrices and EP operators has been their characterization through factorizations. Pearl showed in \cite{Pearl1, Pearl2} that a matrix $T$ is EP if and only if it can be written in the form $U(A\oplus 0)U^*$, with $U$ unitary or isomorphism and $A$ an isomorphism. Although the fact that an EP operator can be written in the form $U(A\oplus 0)U^*$ has been used in the study of EP operators (see for example \cite{Campbell} and \cite{Djordjevic1}), characterizations of EP operators through factorizations of that form have not been studied. In Section 3 we characterize EP operators through factorizations of the form $U(A\oplus 0)U^*$ and through simultaneous factorizations of $T$ and of $T^*$ or of $T$ and of $T^{\dag}$ or of $T^*T$ and of $TT^*$ of similar forms. In \cite{Pearl1} Pearl showed that a matrix $T$ is EP if and only if there exists an isomorphism $V$ such that $T^*=VT$. This result was extended to operators on a Hilbert space by Fildan in \cite{Fildan} (see also \cite{Brock}), to elements of $C^*$-algebras by Harte and Mbekhta in \cite{Harte} and by Koliha in \cite{Koliha3} and to elements of rings by Hartwig in \cite{Hartwig2, Hartwig3}. In Section 4 we show that actually $V$ can be replaced with an injective operator and give other characterizations of  EP operators of that type. In \cite{Pearl3} Pearl proved that if $T$ is a matrix and $B$ and $C$ are matrices with $B$ injective, $C$ surjective and $T=BC$, then $T$ is EP if and only if there exists an isomorphism $K$ such that $C=KB^*$. In Section 5 we characterize EP operators through factorizations of that kind. Note that many of the results in Sections 3, 4 and 5 also give new characterizations for EP matrices.

\section{Preliminaries and notation}

Throughout $\mathcal{H}$ and $\mathcal{K}$ will denote complex Hilbert spaces, $\mathcal{B}(\mathcal{H},\mathcal{K})$ will denote the algebra of all bounded linear operators from $\mathcal{H}$ to $\mathcal{K}$ and $\mathcal{B}(\mathcal{H})$ will denote $\mathcal{B}(\mathcal{H},\mathcal{H})$. The words operator and subspace will mean bounded linear operator and closed linear subspace respectively. An  $n\times n$ matrix will always be identified with an operator on $\mathbb{C}^n$ with its standard basis. If $T\in\mathcal{B}(\mathcal{H})$, then we will denote its kernel by $\mathcal{N}(T)$, its range by $\mathcal{R}(T)$ and its adjoint by $T^*$. We will say that $U\in\mathcal{B}(\mathcal{H},\mathcal{K})$ is an isomorphism if it is injective and onto and that $U\in\mathcal{B}(\mathcal{H},\mathcal{K})$ is unitary if $U^*U=I$ and $UU^*=I$.

We will denote the standard orthonormal basis of $l^2(\mathbb{N})$ by $\{ e_0, e_1, e_2, \ldots\}$. Whenever we refer to the left and the right shift we will mean the left and the right shift on $l^2(\mathbb{N})$.

To denote the direct sum of two subspaces $\mathcal{M}_1$ and $\mathcal{M}_2$ of $\mathcal{H}$ we will use $\mathcal{M}_1\oplus\mathcal{M}_2$ and to denote the orthogonal sum of two orthogonal subspaces $\mathcal{M}_1$ and $\mathcal{M}_2$ of $\mathcal{H}$ we will use $\mathcal{M}_1\displaystyle{\mathop{\oplus}^{\perp}}\mathcal{M}_2$. The orthogonal projection onto a subspace $\mathcal{M}$ of $\mathcal{H}$ will be denoted by $P_\mathcal{M}$. If $T\in\mathcal{B}(\mathcal{H})$ with closed range, then we will denote $P_{\mathcal{R}(T)}$ by $P_T$. By $\mathcal{H}\oplus\mathcal{K}$ we will denote the exterior 2-sum of $\mathcal{H}$ and $\mathcal{K}$. If $T\in\mathcal{B}(\mathcal{H})$ and $S\in\mathcal{B}(\mathcal{K})$, then by $T\oplus S$ we will denote their direct sum acting on $\mathcal{H}\oplus\mathcal{K}$.

The Moore-Penrose inverse of an operator $T\in\mathcal{B}(\mathcal{H},\mathcal{K})$ with closed range is the unique operator $T^{\dag}\in\mathcal{B}(\mathcal{K},\mathcal{H})$ satisfying the following four conditions:
$$
TT^{\dag}T=T,\,\, T^{\dag}TT^{\dag}=T^{\dag},\,\, (TT^{\dag})^*=TT^{\dag},\,\, (T^{\dag}T)^*=T^{\dag}T.
$$
Standard references on generalized inverses are the books of Ben-Israel and Greville \cite{Israel}, Campbell and Meyer \cite{Campbell1} and Groetsch \cite{Groetsch}. It is immediate from the definition of the Moore-Penrose inverse that $\mathcal{R}(T^{\dag})=\mathcal{R}(T^*)$, $\mathcal{N}(T^{\dag})=\mathcal{N}(T^*)$, $P_T=TT^{\dag}$ and $P_{T^*}=T^{\dag}T$.

An operator $T$ with closed range is called EP if $\mathcal{N}(T)=\mathcal{N}(T^*)$. It is easy to see that
\begin{equation}
\label{def}
T \text{ EP}\Leftrightarrow\mathcal{R}(T)=\mathcal{R}(T^*)\Leftrightarrow\displaystyle{\mathcal{R}(T)\mathop{\oplus}^{\perp}\mathcal{N}(T)=\mathcal{H}}\Leftrightarrow TT^{\dag}=T^{\dag}T.
\end{equation}

In the finite dimensional case if $\mathcal{N}(T)$ is contained in $\mathcal{N}(T^*)$ or vice versa, then $T$ is EP (this is not in general true in the infinite dimensional case; for example, let $T$ be the right or the left shift). Using this observation one can immediately see that for matrices many of the results in Sections 3, 4 and 5 hold with weaker assumptions.

Note that the second equivalence in (\ref{def}) is not true if the sum is not an orthogonal one. Actually we have that $\mathcal{R}(T)\oplus\mathcal{N}(T)=\mathcal{H}$ if and only if the Drazin index of $T$ is equal to 0 or 1 (see \cite[Exercise 4.3.5]{Israel}).

Obviously $T$ is EP if and only if $T^*$ is EP and $T$ is EP if and only if $T^{\dag}$ is EP. Moreover if $T$ is EP, then $P_{T^*}T=T$ and $TP_{T}=T$.

Isomorphisms are EP. Moreover we have that if $T$ is EP, then $T$ is an isomorphism if and only if it is injective or surjective. Normal operators with closed range are EP. A bounded projection is EP if and only if it is orthogonal.

A well-known result about EP operators (see \cite{Djordjevic1}) is the following:

\begin{lemma}
\label{lem1} Let $T\in\mathcal{B}(\mathcal{H})$ with closed range.
If $T$ is EP, then
$$T|_{\mathcal{R}(T)}:\mathcal{R}(T)\rightarrow\mathcal{R}(T)$$
is an isomorphism.
\end{lemma}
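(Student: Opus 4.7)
The plan is to use the characterization of EP operators from (\ref{def}), namely that $\mathcal{H} = \mathcal{R}(T) \mathop{\oplus}^{\perp} \mathcal{N}(T)$, since this decomposition is exactly what allows us to control both the injectivity and surjectivity of the restriction.

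First I would verify that $T|_{\mathcal{R}(T)}$ really does take values in $\mathcal{R}(T)$, which is immediate since $Tx \in \mathcal{R}(T)$ for every $x \in \mathcal{H}$, in particular for $x \in \mathcal{R}(T)$. For injectivity, suppose $x \in \mathcal{R}(T)$ with $Tx = 0$. Then $x \in \mathcal{R}(T) \cap \mathcal{N}(T)$, but the orthogonal direct sum decomposition supplied by the EP hypothesis forces $\mathcal{R}(T) \cap \mathcal{N}(T) = \{0\}$, so $x = 0$.

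For surjectivity, take $y \in \mathcal{R}(T)$ and choose any $z \in \mathcal{H}$ with $Tz = y$. Using $\mathcal{H} = \mathcal{R}(T) \mathop{\oplus}^{\perp} \mathcal{N}(T)$, write $z = x + n$ with $x \in \mathcal{R}(T)$ and $n \in \mathcal{N}(T)$. Then $Tx = Tz - Tn = y$, exhibiting $y$ as the image under $T|_{\mathcal{R}(T)}$ of an element of $\mathcal{R}(T)$. Thus $T|_{\mathcal{R}(T)}$ is a bijection from the Banach space $\mathcal{R}(T)$ (closed by assumption) onto itself, and boundedness of the inverse then follows from the open mapping theorem, so $T|_{\mathcal{R}(T)}$ is an isomorphism.

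No step strikes me as a genuine obstacle; the only thing that could go wrong in principle is the interplay between $\mathcal{R}(T)$ and $\mathcal{N}(T)$, and the EP hypothesis is exactly designed to make both their intersection trivial and their sum the whole space. If anything, the delicate point worth flagging is that it is the \emph{direct sum} (not just the trivial intersection) that is needed for surjectivity, which is why an operator whose kernel merely satisfies $\mathcal{N}(T) \cap \mathcal{R}(T) = \{0\}$ would not suffice in the infinite-dimensional setting.
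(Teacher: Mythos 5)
Your proof is correct. The paper itself does not prove this lemma --- it is quoted as a known result with a citation to Djordjevi\'c --- but your argument is exactly the standard one: the orthogonal decomposition $\mathcal{H}=\mathcal{R}(T)\mathop{\oplus}^{\perp}\mathcal{N}(T)$ from (\ref{def}) gives injectivity via $\mathcal{R}(T)\cap\mathcal{N}(T)=\{0\}$ and surjectivity by projecting a preimage onto $\mathcal{R}(T)$, and the open mapping theorem (legitimately applicable because $\mathcal{R}(T)$ is closed) upgrades the bijection to an isomorphism. Your closing remark correctly identifies why the full direct-sum decomposition, and not merely trivial intersection, is what the argument needs.
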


\section{Factorizations of the form $T=U(A\oplus 0)U^*$}

In this section we will characterize EP operators through factorizations of $T$ of the form $T=U(A\oplus 0)U^*$ and through simultaneous factorizations of $T$ and $T^*$ or of $T$ and $T^{\dag}$ or of $T^*T$ and $TT^*$ of similar forms.

We start with two elementary lemmas which we will use in the proofs of these characterizations. The proof of the first one is elementary.

\begin{lemma}
\label{lem2a}
Let $T_1\in\mathcal{B}(\mathcal{H}_1)$ and $T_2\in\mathcal{B}(\mathcal{H}_2)$ with closed range. Then $T_1\oplus T_2$ is EP if and only if $T_1$ and $T_2$ are EP.
\end{lemma}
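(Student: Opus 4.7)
The plan is to use the characterization that an operator $T$ with closed range is EP if and only if $\mathcal{N}(T)=\mathcal{N}(T^*)$, together with the elementary identifications for the direct sum $T:=T_1\oplus T_2$ acting on $\mathcal{H}_1\oplus\mathcal{H}_2$:
$$
T^*=T_1^*\oplus T_2^*,\quad \mathcal{N}(T)=\mathcal{N}(T_1)\oplus\mathcal{N}(T_2),\quad \mathcal{N}(T^*)=\mathcal{N}(T_1^*)\oplus\mathcal{N}(T_2^*),
$$
and the analogous identity $\mathcal{R}(T)=\mathcal{R}(T_1)\oplus\mathcal{R}(T_2)$. The latter ensures that $T$ has closed range (since both summands do by hypothesis), so the EP definition applies to $T$.

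For the direction ($\Leftarrow$), if $\mathcal{N}(T_i)=\mathcal{N}(T_i^*)$ for $i=1,2$, taking the exterior direct sum on both sides immediately yields $\mathcal{N}(T)=\mathcal{N}(T^*)$, so $T$ is EP.

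For the direction ($\Rightarrow$), suppose $\mathcal{N}(T_1)\oplus\mathcal{N}(T_2)=\mathcal{N}(T_1^*)\oplus\mathcal{N}(T_2^*)$ as subspaces of $\mathcal{H}_1\oplus\mathcal{H}_2$. The key observation is that an element of an exterior direct sum $\mathcal{M}_1\oplus\mathcal{M}_2$ is exactly a pair $(x_1,x_2)$ with $x_i\in\mathcal{M}_i$. Hence for any $x_1\in\mathcal{N}(T_1)$ the pair $(x_1,0)$ lies in $\mathcal{N}(T)=\mathcal{N}(T^*)$, forcing $x_1\in\mathcal{N}(T_1^*)$; the reverse inclusion is symmetric, giving $\mathcal{N}(T_1)=\mathcal{N}(T_1^*)$. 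The same argument with the roles of the two coordinates swapped gives $\mathcal{N}(T_2)=\mathcal{N}(T_2^*)$.

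There is no serious obstacle here; the only thing to be careful about is distinguishing the exterior direct sum from an internal sum, and projecting onto each coordinate to extract the componentwise equalities of kernels. This is presumably why the authors label the lemma elementary and omit the proof.
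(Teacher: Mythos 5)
Your proof is correct; the paper explicitly omits the proof of this lemma as elementary, and your componentwise argument (reducing $\mathcal{N}(T_1\oplus T_2)=\mathcal{N}(T_1^*\oplus T_2^*)$ to the coordinate equalities via pairs of the form $(x_1,0)$ and $(0,x_2)$) is exactly the standard argument the authors have in mind. The remark that $\mathcal{R}(T_1\oplus T_2)=\mathcal{R}(T_1)\oplus\mathcal{R}(T_2)$ is closed, so that the EP definition applies to the direct sum, is a worthwhile detail to have made explicit.
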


\begin{lemma}
\label{lem2}
Let $T\in\mathcal{B}(\mathcal{H})$ and $G\in\mathcal{B}(\mathcal{K})$ with closed range and $U\in\mathcal{B}(\mathcal{K},\mathcal{H})$ injective such that $T=UGU^*$. Then $T$ is EP if and only if $G$ is EP.
\end{lemma}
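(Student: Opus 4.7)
My plan is to transfer the EP condition between $T$ and $G$ through the nullspace identities
\begin{equation*}
\mathcal{N}(T) = (U^*)^{-1}\mathcal{N}(G), \qquad \mathcal{N}(T^*) = (U^*)^{-1}\mathcal{N}(G^*),
\end{equation*}
which follow from $T = UGU^*$, $T^* = UG^*U^*$ and the injectivity of $U$ (the latter removes the extra $U$ in front of $G U^* x$).

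The forward implication is then immediate: if $G$ is EP, $\mathcal{N}(G) = \mathcal{N}(G^*)$, the two preimages coincide, $\mathcal{N}(T) = \mathcal{N}(T^*)$, and since $T$ has closed range by hypothesis, $T$ is EP.

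The reverse implication is the real obstacle, because $U^*$ need not be surjective and so equality of the two preimages does not formally force $\mathcal{N}(G) = \mathcal{N}(G^*)$. The plan is to bridge this gap with a density argument. Since $U$ is injective, $\mathcal{R}(U^*)$ is dense in $\mathcal{K}$, so given $y \in \mathcal{N}(G)$ I would choose $x_n \in \mathcal{H}$ with $U^*x_n \to y$; then $Tx_n = U G (U^*x_n) \to 0$ by continuity. Because $T$ has closed range, $T$ is bounded below on $\mathcal{N}(T)^\perp$, so writing $x_n = z_n + w_n$ with $z_n \in \mathcal{N}(T)$ and $w_n \in \mathcal{N}(T)^\perp$ yields $\|w_n\| \le c^{-1}\|Tx_n\| \to 0$, and hence $U^*z_n \to y$. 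Now $T$ being EP gives $z_n \in \mathcal{N}(T^*)$, so $U G^* U^* z_n = 0$, and injectivity of $U$ gives $G^* U^* z_n = 0$; continuity of $G^*$ then yields $G^* y = 0$. This proves $\mathcal{N}(G) \subseteq \mathcal{N}(G^*)$, and the symmetric argument, using that $\mathcal{R}(T^*)$ is also closed, gives the reverse inclusion. Combined with the standing hypothesis that $G$ has closed range, this shows $G$ is EP.

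The crux of the argument is the interplay in the reverse direction between the density of $\mathcal{R}(U^*)$ (from injectivity of $U$) and the boundedness below of $T$ on $\mathcal{N}(T)^\perp$ (from closed range of $T$): together they produce, from any $y \in \mathcal{N}(G)$, an approximating sequence in $U^*\mathcal{N}(T)$, allowing $\mathcal{N}(T) = \mathcal{N}(T^*)$ to propagate back to $\mathcal{N}(G) = \mathcal{N}(G^*)$.
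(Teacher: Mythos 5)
Your proof is correct, and the forward direction ($G$ EP implies $T$ EP) is exactly the paper's: both pass through $\mathcal{N}(T)=(U^*)^{-1}\mathcal{N}(G)$ and $\mathcal{N}(T^*)=(U^*)^{-1}\mathcal{N}(G^*)$. For the harder direction, however, the paper argues with ranges rather than kernels: from $T$ EP and the injectivity of $U$ it deduces $G(\mathcal{R}(U^*))=G^*(\mathcal{R}(U^*))$, and then the density of $\mathcal{R}(U^*)$ in $\mathcal{K}$ together with the closedness of $\mathcal{R}(G)$ and $\mathcal{R}(G^*)$ yields $\mathcal{R}(G)=\mathcal{R}(G^*)$ in two lines, since taking closures of $G(\mathcal{R}(U^*))$ and $G^*(\mathcal{R}(U^*))$ recovers the full (closed) ranges. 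Your version stays with the kernel formulation and compensates for the non-surjectivity of $U^*$ by an approximation argument: you use that $T$ is bounded below on $\mathcal{N}(T)^{\perp}$ (closedness of $\mathcal{R}(T)$) to upgrade an arbitrary approximating sequence $U^*x_n\to y$ to one of the form $U^*z_n\to y$ with $z_n\in\mathcal{N}(T)=\mathcal{N}(T^*)$, and then let continuity of $G^*$ finish. Both arguments hinge on the density of $\mathcal{R}(U^*)$; the paper's route is shorter because the range characterization of EP absorbs the closure step automatically, while yours is more hands-on and makes explicit that it is the closed range of $T$ (rather than of $G$) that drives the approximation, with the closed range of $G$ needed only to invoke the definition of EP at the end.
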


\begin{proof}
If $T$ is EP, then, by the injectivity of $U$, we get that $G(\mathcal{R}(U^*))=G^*(\mathcal{R}(U^*))$. Since $\mathcal{R}(U^*)$ is dense in $\mathcal{K}$ and $G$ has closed range, that implies $\mathcal{R}(G)=\mathcal{R}(G^*)$. On the other hand if $G$ is EP, then $\mathcal{N}(G)=\mathcal{N}(G^*)$. Combining that with the injectivity of $U$ we get that $\mathcal{N}(T)=\mathcal{N}(T^*)$.
\end{proof}

\begin{remark}
\label{lem2b} (a) The result of the previous lemma is not true if $U$ is not injective. To see that, for the one direction let
$$T=\left[
\begin{array}{cc}
1&0\\
0&0
\end{array}
\right],\,\,
G=\left[
\begin{array}{ccc}
0&1&0\\
0&1&0\\
0&0&0
\end{array}
\right],\,\,
U=\left[
\begin{array}{ccc}
0&1&0\\
0&0&1
\end{array}
\right]
$$
and for the other one let
$$T=\left[
\begin{array}{cc}
0&1\\
0&0
\end{array}
\right],\,\,
G=\left[
\begin{array}{ccc}
0&1&0\\
0&0&1\\
1&0&0
\end{array}
\right],\,\,
U=\left[
\begin{array}{ccc}
0&1&0\\
0&0&1
\end{array}
\right].
$$
(b) An immediate corollary of the previous lemma is that if $G\in\mathcal{B}(\mathcal{K})$ is EP and $U\in\mathcal{B}(\mathcal{K},\mathcal{H})$ is unitary, then $UGU^*=UGU^{-1}$ is EP. On the other hand if $G\in\mathcal{B}(\mathcal{K})$ is EP and
$U\in\mathcal{B}(\mathcal{K},\mathcal{H})$ is an isomorphism, then it is not in general true that $UGU^{-1}$ is EP. To see that let $\mathcal{M}_1$ and $\mathcal{M}_2$ be subspaces of $\mathcal{H}$, with $\mathcal{M}_1^{\perp}\ne\mathcal{M}_2$ and $\mathcal{M}_1\oplus\mathcal{M}_2=\mathcal{H}$, $\mathcal{K}$ be the exterior 2-sum of $\mathcal{M}_1$ and $\mathcal{M}_2$, $G:\mathcal{K}\rightarrow\mathcal{K}$, with $G=I_{\mathcal{M}_1}\oplus 0$, and $U:\mathcal{K}\rightarrow\mathcal{H}$, with $U(y,z)=y+z$, for all $(y,z)\in \mathcal{K}$.
\end{remark}

In the following theorem we characterize EP operators through factorizations of the form $T=U(A\oplus 0)U^*$.

\begin{theorem}
\label{prop1} Let $T\in\mathcal{B}(\mathcal{H})$ with closed range. Then the following are equivalent:
\begin{enumerate}
\item $T$ is EP.
\item There exist Hilbert spaces $\mathcal{K}_1$ and $\mathcal{L}_1$, $U_1\in\mathcal{B}(\mathcal{K}_1\oplus \mathcal{L}_1,\mathcal{H})$ unitary and $A_1\in\mathcal{B}(\mathcal{K}_1)$ isomorphism such that $T=U_1(A_1\oplus 0)U_1^*$.
\item There exist Hilbert spaces $\mathcal{K}_2$ and $\mathcal{L}_2$, $U_2\in\mathcal{B}(\mathcal{K}_2\oplus \mathcal{L}_2,\mathcal{H})$ isomorphism and $A_2\in\mathcal{B}(\mathcal{K}_2)$ isomorphism such that $T=U_2(A_2\oplus 0)U_2^*$.
\item There exist Hilbert spaces $\mathcal{K}_3$ and $\mathcal{L}_3$, $U_3\in\mathcal{B}(\mathcal{K}_3\oplus \mathcal{L}_3,\mathcal{H})$ injective and
$A_3\in\mathcal{B}(\mathcal{K}_3)$ isomorphism such that $T=U_3(A_3\oplus 0)U_3^*$.
\end{enumerate}
\end{theorem}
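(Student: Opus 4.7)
The plan is to prove the cycle $(1)\Rightarrow(2)\Rightarrow(3)\Rightarrow(4)\Rightarrow(1)$. Three of the four steps are essentially free: the implications $(2)\Rightarrow(3)\Rightarrow(4)$ are immediate, since every unitary is an isomorphism and every isomorphism is injective; in each case the same spaces and operators $A_i$ continue to work. This reduces the real content of the theorem to $(1)\Rightarrow(2)$ and $(4)\Rightarrow(1)$.

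For $(4)\Rightarrow(1)$, I would exploit the two elementary lemmas just proved. Since $A_3$ is an isomorphism it is EP, and $0$ is trivially EP, so Lemma \ref{lem2a} gives that $A_3\oplus 0\in\mathcal{B}(\mathcal{K}_3\oplus\mathcal{L}_3)$ is EP. Combined with the injectivity of $U_3$ in the factorization $T=U_3(A_3\oplus 0)U_3^*$, Lemma \ref{lem2} then yields that $T$ is EP. This is the step where one has to be cautious, because Remark \ref{lem2b}(a) shows that the conclusion genuinely fails without the injectivity of $U_3$; so the hypothesis in $(4)$ cannot be relaxed any further.

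For $(1)\Rightarrow(2)$, the natural strategy is to use the orthogonal decomposition supplied by (\ref{def}): since $T$ is EP, $\mathcal{H}=\mathcal{R}(T)\mathop{\oplus}^{\perp}\mathcal{N}(T)$. Set $\mathcal{K}_1=\mathcal{R}(T)$ and $\mathcal{L}_1=\mathcal{N}(T)$, and let $U_1\in\mathcal{B}(\mathcal{K}_1\oplus\mathcal{L}_1,\mathcal{H})$ be the canonical map $U_1(x,y)=x+y$; orthogonality of the summands makes $U_1$ an isometric bijection, hence unitary, with adjoint $U_1^*h=(P_{\mathcal{R}(T)}h,P_{\mathcal{N}(T)}h)$. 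Lemma \ref{lem1} ensures that $A_1:=T|_{\mathcal{R}(T)}\colon\mathcal{R}(T)\to\mathcal{R}(T)$ is an isomorphism, so it can play the role of the block in (2). A direct computation, using that $T$ annihilates $\mathcal{N}(T)$ and acts as $A_1$ on $\mathcal{R}(T)$, then verifies $U_1(A_1\oplus 0)U_1^*=T$.

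Since all four items rely only on Lemmas \ref{lem1}, \ref{lem2a} and \ref{lem2} together with the orthogonality characterization in (\ref{def}), no serious obstacle is expected. The subtlest point is simply to remember that in $(4)\Rightarrow(1)$ one is in the setting covered by Lemma \ref{lem2} rather than by a stronger unitary-conjugation result, so the argument must cite the injective version; the rest is bookkeeping.
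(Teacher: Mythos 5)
Your proposal is correct and follows essentially the same route as the paper: the decomposition $\mathcal{H}=\mathcal{R}(T)\mathop{\oplus}^{\perp}\mathcal{N}(T)$ with $U_1(y,z)=y+z$ and $A_1=T|_{\mathcal{R}(T)}$ for $(1)\Rightarrow(2)$, the trivial chain $(2)\Rightarrow(3)\Rightarrow(4)$, and Lemmas \ref{lem2a} and \ref{lem2} for $(4)\Rightarrow(1)$. The only difference is that you spell out explicitly why $A_3\oplus 0$ is EP, which the paper leaves implicit.
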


\begin{proof}
(1)$\Rightarrow$(2): Let $\mathcal{K}_1=\mathcal{R}(T)$, $\mathcal{L}_1=\mathcal{N}(T)$, $U_1:\mathcal{K}_1\oplus\mathcal{L}_1\rightarrow\mathcal{H}$, with
$$U_1(y,z)=y+z,$$
for all $y\in\mathcal{R}(T)$ and $z\in\mathcal{N}(T)$, and $A_1=T|_{\mathcal{R}(T)}:\mathcal{R}(T)\rightarrow\mathcal{R}(T)$. Since $T$ is EP, $\displaystyle{\mathcal{R}(T)\mathop{\oplus}^{\perp}\mathcal{N}(T)=\mathcal{H}}$ and thus $U_1$ is unitary. Moreover it is easy to see that
$$U_1^*x=(P_Tx,P_{\mathcal{N}(T)}x),$$
for all $x\in \mathcal{H}$.  From Lemma \ref{lem1}, $A_1$ is an isomorphism. Using $TP_T=T$, which follows from $T$ being EP, we get that
$$T=U_1(A_1\oplus 0)U_1^*.$$
(2)$\Rightarrow$(3)$\Rightarrow$(4) is obvious and (4)$\Rightarrow$(1) follows from Lemmas \ref{lem2a} and \ref{lem2}.
\end{proof}

\begin{remark}
\label{prop1a} (a) What we said in Remark \ref{lem2b}(a) shows that if in the previous theorem we don't assume that $U_3$ is injective, then $T$ is not in general EP.\\
(b) It is easy to see that if $T=U(A\oplus 0)U^*\in\mathcal{B}(\mathcal{H})$, with $U\in\mathcal{B}(\mathcal{K}\oplus \mathcal{L},\mathcal{H})$ unitary and $A\in\mathcal{B}(\mathcal{K})$ an isomorphism, then $T^{\dag}=U(A^{-1}\oplus 0)U^*$.
\end{remark}

If in (2) of Theorem \ref{prop1} we just assume that $A_1$ is injective with closed range, then $T$ is not in general EP. In the following proposition we show that the existence of simultaneous factorizations of $T$ and $T^*$ of the form $T=U(A\oplus 0)U^*$ and $T^*=U(B\oplus 0)U^*$, with $U$, $A$ and $B$ injective, implies that $T$ is EP.

\begin{proposition}
\label{prop1b} Let $T\in\mathcal{B}(\mathcal{H})$ with closed range. Then the following are equivalent:
\begin{enumerate}
\item $T$ is EP.
\item
\begin{enumerate}
\item There exist Hilbert spaces $\mathcal{K}_1$ and
$\mathcal{L}_1$, $V_1\in\mathcal{B}(\mathcal{K}_1\oplus
\mathcal{L}_1,\mathcal{H})$ injective,
$W_1\in\mathcal{B}(\mathcal{K}_1\oplus
\mathcal{L}_1,\mathcal{H})$,
$S_1\in\mathcal{B}(\mathcal{H},\mathcal{K}_1\oplus
\mathcal{L}_1)$, $A_1\in\mathcal{B}(\mathcal{K}_1)$ injective and
$B_1\in\mathcal{B}(\mathcal{K}_1)$ such that $T=V_1(A_1\oplus
0)S_1$ and $T^*=W_1(B_1\oplus 0)S_1$. \item There exist Hilbert
spaces $\mathcal{K}_2$ and $\mathcal{L}_2$,
$V_2\in\mathcal{B}(\mathcal{K}_2\oplus
\mathcal{L}_2,\mathcal{H})$,
$W_2\in\mathcal{B}(\mathcal{K}_2\oplus \mathcal{L}_2,\mathcal{H})$
injective, $S_2\in\mathcal{B}(\mathcal{H},\mathcal{K}_2\oplus
\mathcal{L}_2)$, $A_2\in\mathcal{B}(\mathcal{K}_2)$, and
$B_2\in\mathcal{B}(\mathcal{K}_2)$ injective such that
$T=V_2(A_2\oplus 0)S_2$ and $T^*=W_2(B_2\oplus 0)S_2$
\end{enumerate}
\end{enumerate}
\end{proposition}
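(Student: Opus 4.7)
The plan is to prove (1)$\Leftrightarrow$(2) by deriving (1)$\Rightarrow$(2) directly from Theorem \ref{prop1}, and (2)$\Rightarrow$(1) by extracting one inclusion between $\mathcal{N}(T)$ and $\mathcal{N}(T^*)$ from each of (a) and (b); taken together, they yield $\mathcal{N}(T)=\mathcal{N}(T^*)$, so $T$ is EP.

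For (1)$\Rightarrow$(2), if $T$ is EP, Theorem \ref{prop1} provides Hilbert spaces $\mathcal{K}$, $\mathcal{L}$, a unitary $U\in\mathcal{B}(\mathcal{K}\oplus\mathcal{L},\mathcal{H})$ and an isomorphism $A\in\mathcal{B}(\mathcal{K})$ with $T=U(A\oplus 0)U^*$, whence $T^*=U(A^*\oplus 0)U^*$. I would then set, for both $i=1$ and $i=2$, $V_i=W_i=U$, $S_i=U^*$, $A_i=A$ and $B_i=A^*$. Since $U$ is unitary it is injective, and $A$ and $A^*$ are isomorphisms hence injective, so all the injectivity requirements of both (a) and (b) are satisfied simultaneously.

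For (2)$\Rightarrow$(1), the key structural point is that inside each of (a) and (b) the \emph{same} operator $S_i$ appears in the factorization of $T$ and of $T^*$. Using (a), take $x\in\mathcal{N}(T)$; then $V_1(A_1\oplus 0)S_1 x=0$, and the injectivity of $V_1$ forces $(A_1\oplus 0)S_1 x=0$. Writing $S_1 x=(y,z)$ with $y\in\mathcal{K}_1$ and $z\in\mathcal{L}_1$, we obtain $A_1 y=0$, and the injectivity of $A_1$ gives $y=0$; therefore $S_1 x=(0,z)$. But then $(B_1\oplus 0)S_1 x=0$ regardless of $B_1$, whence $T^*x=W_1(B_1\oplus 0)S_1 x=0$, giving $\mathcal{N}(T)\subseteq\mathcal{N}(T^*)$. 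The completely symmetric argument starting from $x\in\mathcal{N}(T^*)$ and using the injectivity of $W_2$ and $B_2$ in (b) will give $\mathcal{N}(T^*)\subseteq\mathcal{N}(T)$.

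The only real subtlety I see is the reason why both (a) and (b) are needed in the hypothesis: in the matrix case either single inclusion between the kernels already suffices (by the remark in the preliminaries), but in the infinite dimensional setting a one-sided inclusion is insufficient, as the shift examples show. So each of (a) and (b) should be expected to produce exactly one of the two kernel inclusions, and neither alone should be expected to characterize EP. Beyond this observation, no serious obstacle is anticipated; the argument is really a bookkeeping exercise built around the injectivity of $V_1$, $A_1$, $W_2$ and $B_2$, together with the fact that both factorizations in each of (a) and (b) share the same right-hand factor $S_i$.
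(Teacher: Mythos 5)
Your proof is correct and follows essentially the same route as the paper: (1)$\Rightarrow$(2) via Theorem \ref{prop1} with $B_i=A^*$, and (2)$\Rightarrow$(1) by observing that the injectivity of $V_1$ and $A_1$ identifies $\mathcal{N}(T)$ with $S_1^{-1}(\{0\}\oplus\mathcal{L}_1)$, which the shared factor $S_1$ then places inside $\mathcal{N}(T^*)$, with the symmetric argument from (b) giving the reverse inclusion. Your remark on why both (a) and (b) are needed in the infinite-dimensional setting matches the paper's discussion in the preliminaries.
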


\begin{proof}
(1)$\Rightarrow$(2) follows from Theorem \ref{prop1}.\\
(2)$\Rightarrow$(1): First assume that (a) holds. From $T=V_1(A_1\oplus 0)S_1$ and the injectivity of $V_1$ and $A_1$ we get
$$
\mathcal{N}(T)=S_1^{-1}\left(\{ 0\}\oplus\mathcal{L}_1\right).
$$
By $T^*=W_1(B_1\oplus 0)S_1$ we get that
$$
S_1^{-1}\left(\{ 0\}\oplus\mathcal{L}_1\right)\subseteq\mathcal{N}(T^*).
$$
So
$$
\mathcal{N}(T)\subseteq\mathcal{N}(T^*).
$$
Similarly, by (b), we get $\mathcal{N}(T^*)\subseteq\mathcal{N}(T)$. Therefore $T$ is EP.
\end{proof}

\begin{remark}
\label{prop1c}
(a) If in (2)(a) we just assume that $V_1$ is injective or that $A_1$ is injective, then $T$ is not in general EP.\\
(b) Using Remark \ref{prop1a}(b) and $\mathcal{N}(T^{\dag})=\mathcal{N}(T^*)$ we get that the results of the previous proposition also hold if we replace $T^*$ with $T^{\dag}$.
\end{remark}

What we said in Remark \ref{lem2b}(b) shows that if in (3) of Theorem \ref{prop1} we replace $T=U_2(A_2\oplus 0)U_2^*$ with
$T=U_2(A_2\oplus 0)U_2^{-1}$, then $T$ is not in general EP (actually if $T=U(A\oplus 0)U^{-1}$, with $U$ and $A$ isomorphisms, then $T$ has Drazin index equal to 0 or 1; see \cite[p. 1728]{Djordjevic1}). In the following proposition we show that the existence of simultaneous factorizations of $T$ and $T^*$ of the form $T=U(A\oplus 0)U^{-1}$ and $T^*=U(B\oplus 0)U^{-1}$, with $U$ an isomorphism and one of $A$ and $B$ an isomorphism, implies that $T$ is EP.

\begin{proposition}
\label{prop1d}
Let $T\in\mathcal{B}(\mathcal{H})$ with closed range. Then the following are equivalent:
\begin{enumerate}
\item $T$ is EP.
\item There exist Hilbert spaces $\mathcal{K}_1$ and $\mathcal{L}_1$, $U_1\in\mathcal{B}(\mathcal{K}_1\oplus \mathcal{L}_1,\mathcal{H})$ isomorphism, $A_1\in\mathcal{B}(\mathcal{K}_1)$ isomorphism and $B_1\in\mathcal{B}(\mathcal{K}_1)$ such that $T=U_1(A_1\oplus 0)U^{-1}_1$ and $T^*=U_1(B_1\oplus
0)U_1^{-1}$.
\end{enumerate}
\end{proposition}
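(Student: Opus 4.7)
The plan is to prove the two implications by leveraging the factorization form directly, without digging into the relation between $A_1$ and $B_1$.

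For (1)$\Rightarrow$(2), I would invoke Theorem \ref{prop1}, specifically condition (2). If $T$ is EP, there exist $\mathcal{K}_1$, $\mathcal{L}_1$, $U_1\in\mathcal{B}(\mathcal{K}_1\oplus\mathcal{L}_1,\mathcal{H})$ unitary and $A_1\in\mathcal{B}(\mathcal{K}_1)$ an isomorphism with $T=U_1(A_1\oplus 0)U_1^*$. Since $U_1$ is unitary, $U_1^{-1}=U_1^*$, so $T=U_1(A_1\oplus 0)U_1^{-1}$. Taking adjoints gives $T^*=U_1(A_1^*\oplus 0)U_1^{-1}$, so setting $B_1=A_1^*$ yields the desired factorizations.

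For (2)$\Rightarrow$(1), the key observation is that both $U_1$ and $A_1$ being isomorphisms makes the range and kernel of $T$ completely transparent from the factorization. Specifically, writing $\mathcal{M}=U_1(\mathcal{K}_1\oplus\{0\})$ and $\mathcal{N}=U_1(\{0\}\oplus\mathcal{L}_1)$, one checks that $\mathcal{R}(T)=\mathcal{M}$ (using surjectivity of $A_1$) and $\mathcal{N}(T)=\mathcal{N}$ (using injectivity of $A_1$ and $U_1$). The factorization $T^*=U_1(B_1\oplus 0)U_1^{-1}$ implies, by a direct computation, that $T^*$ annihilates $\mathcal{N}=U_1(\{0\}\oplus\mathcal{L}_1)$, so $\mathcal{N}(T)\subseteq\mathcal{N}(T^*)$.

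Now I would use that $T$ has closed range to promote this inclusion to the EP property. Since $\mathcal{R}(T)$ is closed, $\mathcal{N}(T^*)=\mathcal{R}(T)^\perp=\mathcal{M}^\perp$, and hence $\mathcal{N}\subseteq\mathcal{M}^\perp$, i.e.\ $\mathcal{M}\perp\mathcal{N}$. On the other hand, $\mathcal{M}+\mathcal{N}=U_1(\mathcal{K}_1\oplus\mathcal{L}_1)=\mathcal{H}$ and $\mathcal{M}\cap\mathcal{N}=\{0\}$ from the injectivity of $U_1$. Combining orthogonality with the direct sum decomposition forces $\mathcal{N}=\mathcal{M}^\perp$, so $\mathcal{R}(T)\mathop{\oplus}^{\perp}\mathcal{N}(T)=\mathcal{H}$, which is one of the equivalent formulations of EP in (\ref{def}).

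The main conceptual hurdle is recognizing that one does not need to show $B_1$ is injective (and hence an isomorphism); the mere fact that the zero block of $T^*$ in the $\mathcal{L}_1$-coordinate forces $\mathcal{N}(T)\subseteq\mathcal{N}(T^*)$ is enough, once combined with closedness of $\mathcal{R}(T)$ and the algebraic decomposition of $\mathcal{H}$ provided by $U_1$. The asymmetric hypothesis on $A_1$ versus $B_1$ is thus not an obstacle: the isomorphism assumption on $A_1$ alone controls the geometry, and $B_1$ need only deliver the one-sided inclusion of kernels.
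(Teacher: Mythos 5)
Your proof is correct, and the second half takes a genuinely different route from the paper's. The implication (1)$\Rightarrow$(2) is exactly the paper's (quote Theorem \ref{prop1}(2), use $U_1^*=U_1^{-1}$, set $B_1=A_1^*$). For (2)$\Rightarrow$(1) the paper, like you, first extracts $\mathcal{N}(T)\subseteq\mathcal{N}(T^*)$ from the vanishing of $T^*$ on $U_1(\{0\}\oplus\mathcal{L}_1)=\mathcal{N}(T)$, but then obtains the reverse inclusion by taking adjoints of \emph{both} factorizations, so that $T^*=(U_1^*)^{-1}(A_1^*\oplus 0)U_1^*$ and $T=(U_1^*)^{-1}(B_1^*\oplus 0)U_1^*$, and rerunning the same kernel argument with $(U_1^*)^{-1}$ and $A_1^*$ in place of $U_1$ and $A_1$. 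You instead stop after the single inclusion and upgrade it geometrically: the isomorphisms $U_1$ and $A_1$ force the algebraic decomposition $\mathcal{R}(T)\oplus\mathcal{N}(T)=\mathcal{H}$, and once $\mathcal{N}(T)\subseteq\mathcal{N}(T^*)=\mathcal{R}(T)^{\perp}$, a direct sum of two subspaces that are orthogonal and fill $\mathcal{H}$ must be the orthogonal decomposition, giving the EP characterization from (\ref{def}). Both arguments are sound; yours isolates the cleaner statement that an operator of Drazin index at most one with $\mathcal{N}(T)\subseteq\mathcal{N}(T^*)$ is automatically EP, and shows that the factorization of $T^*$ is needed only for that one inclusion, whereas the paper's adjoint trick is shorter on the page and runs in parallel with its proof of Proposition \ref{prop1b}. (A cosmetic point: $\mathcal{N}(T^*)=\mathcal{R}(T)^{\perp}$ does not actually require closedness of the range, and $\mathcal{R}(T)=U_1(\mathcal{K}_1\oplus\{0\})$ is closed in any case.)
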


\begin{proof}
(1)$\Rightarrow$(2) follows from Theorem \ref{prop1}.\\
(2)$\Rightarrow$(1): As in the proof of (2)$\Rightarrow$(1) in Proposition \ref{prop1b} we get that
$$
\mathcal{N}(T)\subseteq\mathcal{N}(T^*).
$$
Taking adjoints in (2) we get
$$
T^*=(U_1^*)^{-1}(A_1^*\oplus 0)U_1^* \text{ and } T=(U_1^*)^{-1}(B_1^*\oplus 0)U_1^*,
$$
which gives us in the same manner
$
\mathcal{N}(T^*)\subseteq\mathcal{N}(T)
$
and so $T$ is EP.
\end{proof}

\begin{remark}
\label{prop1e}
Using Remark \ref{prop1a}(b), $\mathcal{N}(T^{\dag})=\mathcal{N}(T^*)$ and $\mathcal{N}((T^*)^{\dag})=\mathcal{N}(T)$ we get that the results of the previous proposition also hold if we replace $T^*$ with $T^{\dag}$.
\end{remark}

We continue with a characterization of EP operators through simultaneous factorizations of $T^*T$ and $TT^*$.

\begin{proposition}
\label{prop2}
Let $T\in\mathcal{B}(\mathcal{H})$ with closed range. Then the following are equivalent:
\begin{enumerate}
\item $T$ is EP.
\item There exist Hilbert spaces $\mathcal{K}_1$ and $\mathcal{L}_1$, $U_1\in\mathcal{B}(\mathcal{K}_1\oplus \mathcal{L}_1,\mathcal{H})$ unitary and $A_1\in\mathcal{B}(\mathcal{K}_1)$ isomorphism such that $T^*T=U_1(A_1^*A_1\oplus 0)U_1^*$ and $TT^*=U_1(A_1A_1^*\oplus 0)U_1^*$.
\item
\begin{enumerate}
\item There exist Hilbert spaces $\mathcal{K}_2$ and
$\mathcal{L}_2$, $V_2\in\mathcal{B}(\mathcal{K}_2\oplus
\mathcal{L}_2,\mathcal{H})$ injective,
$W_2\in\mathcal{B}(\mathcal{K}_2\oplus
\mathcal{L}_2,\mathcal{H})$,
$S_2\in\mathcal{B}(\mathcal{H},\mathcal{K}_2\oplus
\mathcal{L}_2)$, $A_2\in\mathcal{B}(\mathcal{K}_2)$ injective and
$B_2\in\mathcal{B}(\mathcal{K}_2)$ such that $T^*T=V_2(A_2\oplus
0)S_2$ and $TT^*=W_2(B_2\oplus 0)S_2$. \item There exist Hilbert
spaces $\mathcal{K}_3$ and $\mathcal{L}_3$,
$V_3\in\mathcal{B}(\mathcal{K}_3\oplus
\mathcal{L}_3,\mathcal{H})$,
$W_3\in\mathcal{B}(\mathcal{K}_3\oplus \mathcal{L}_3,\mathcal{H})$
injective, $S_3\in\mathcal{B}(\mathcal{H},\mathcal{K}_3\oplus
\mathcal{L}_3)$, $A_3\in\mathcal{B}(\mathcal{K}_3)$, and
$B_3\in\mathcal{B}(\mathcal{K}_3)$ injective such that
$T^*T=V_3(A_3\oplus 0)S_3$ and $TT^*=W_3(B_3\oplus 0)S_3$.
\end{enumerate}
\item There exist Hilbert spaces $\mathcal{K}_4$ and $\mathcal{L}_4$, $U_4\in\mathcal{B}(\mathcal{K}_4\oplus \mathcal{L}_4,\mathcal{H})$ isomorphism and $A_4, B_4\in\mathcal{B}(\mathcal{K}_4)$ injective with closed range such that $T^*T=U_4(A_4\oplus 0)U_4^{-1}$ and $TT^*=U_4(B_4\oplus 0)U_4^{-1}$.
\end{enumerate}
\end{proposition}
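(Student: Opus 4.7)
The plan is to close the loop $(1)\Rightarrow(2)\Rightarrow(3)\Rightarrow(1)$ and $(1)\Rightarrow(2)\Rightarrow(4)\Rightarrow(1)$, in complete analogy with the proofs of Theorem~\ref{prop1}, Proposition~\ref{prop1b} and Proposition~\ref{prop1d}. The key additional ingredient, which replaces the injectivity of the ``inner'' operators in the earlier propositions, is the standard identities $\mathcal{N}(T^*T)=\mathcal{N}(T)$ and $\mathcal{N}(TT^*)=\mathcal{N}(T^*)$.

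For $(1)\Rightarrow(2)$ I would invoke Theorem~\ref{prop1} to write $T=U_1(A_1\oplus 0)U_1^*$ with $U_1$ unitary and $A_1$ an isomorphism. Because $U_1^*U_1=I_{\mathcal{K}_1\oplus\mathcal{L}_1}$, a direct computation gives $T^*T=U_1(A_1^*A_1\oplus 0)U_1^*$ and $TT^*=U_1(A_1A_1^*\oplus 0)U_1^*$, which is exactly~(2). The implications $(2)\Rightarrow(3)\text{(a)},(3)\text{(b)}$ are immediate by setting $V=W=U_1$, $S=U_1^*$ and taking as inner operators $A_1^*A_1$ and $A_1A_1^*$, both of which are isomorphisms since $A_1$ is. For $(2)\Rightarrow(4)$ I would use the same unitary $U_1$, which is an isomorphism with $U_1^{-1}=U_1^*$, so that the factorizations of $T^*T$ and $TT^*$ already have the required form with $A_4=A_1^*A_1$ and $B_4=A_1A_1^*$; these are positive isomorphisms, hence a fortiori injective with closed range.

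For $(3)\Rightarrow(1)$ I would argue exactly as in the proof of Proposition~\ref{prop1b}. Assuming~(a), from the factorization $T^*T=V_2(A_2\oplus 0)S_2$ together with the injectivity of $V_2$ and $A_2$ one obtains $\mathcal{N}(T)=\mathcal{N}(T^*T)=S_2^{-1}(\{0\}\oplus\mathcal{L}_2)$. The factorization $TT^*=W_2(B_2\oplus 0)S_2$ shares the same $S_2$, so $S_2^{-1}(\{0\}\oplus\mathcal{L}_2)\subseteq\mathcal{N}(TT^*)=\mathcal{N}(T^*)$, giving $\mathcal{N}(T)\subseteq\mathcal{N}(T^*)$. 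Assumption~(b) symmetrically yields $\mathcal{N}(T^*)\subseteq\mathcal{N}(T)$, so $T$ is EP.

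For $(4)\Rightarrow(1)$, fix $x\in\mathcal{H}$ and write $U_4^{-1}x=(y,z)$. Then $T^*Tx=0$ iff $(A_4y,0)=0$, and since $A_4$ is injective this forces $y=0$; thus $\mathcal{N}(T)=\mathcal{N}(T^*T)=U_4(\{0\}\oplus\mathcal{L}_4)$. The same argument with $B_4$ in place of $A_4$ gives $\mathcal{N}(T^*)=\mathcal{N}(TT^*)=U_4(\{0\}\oplus\mathcal{L}_4)$, and the two kernels coincide. The only mild subtlety here, and essentially the only non-automatic step in the whole proof, is the observation that in condition~(4) the same isomorphism $U_4$ and the same decomposition $\mathcal{K}_4\oplus\mathcal{L}_4$ are used for both factorizations, which is precisely what forces the two kernels to land on the same subspace; without this common $U_4$ the conclusion would fail.
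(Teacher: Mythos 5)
Your proposal is correct and follows essentially the same route as the paper: $(1)\Rightarrow(2)$ via Theorem~\ref{prop1}, the remaining forward implications read off directly from the factorization, and the converse via the kernel identities $\mathcal{N}(T^*T)=\mathcal{N}(T)$, $\mathcal{N}(TT^*)=\mathcal{N}(T^*)$ exactly as in Proposition~\ref{prop1b}. The only cosmetic difference is that the paper disposes of $(4)$ by noting $(4)\Rightarrow(3)$ is obvious and reusing $(3)\Rightarrow(1)$, whereas you prove $(4)\Rightarrow(1)$ directly; the underlying argument is the same.
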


\begin{proof}
(1)$\Rightarrow$(2) follows from Theorem \ref{prop1}. (2)$\Rightarrow$(3), (2)$\Rightarrow$(4) and (4)$\Rightarrow$(3) are obvious. (3)$\Rightarrow$(1) can be proved in a manner similar to the proof of (2)$\Rightarrow$(1) in Proposition \ref{prop1b} using $\mathcal{N}(T^*T)=\mathcal{N}(T)$ and $\mathcal{N}(TT^*)=\mathcal{N}(T^*)$.
\end{proof}

\begin{remark}
\label{prop2a}
(a) If we only assume that one of the conditions in (2) holds, then $T$ is not in general EP. To see that let $T=\left[
\begin{smallmatrix} 0&1\\0&0\end{smallmatrix}\right]$.\\
(b) If there exist Hilbert spaces $\mathcal{K}$ and $\mathcal{L}$, $U\in\mathcal{B}(\mathcal{K}\oplus\mathcal{L},\mathcal{H})$ unitary, $A\in\mathcal{B}(\mathcal{K})$ isomorphism and $B\in\mathcal{B}(\mathcal{K})$ such that $T^*T=U(A\oplus 0)U^*$ and $TT^*=U(B\oplus 0)U^*$,
then $T$ is not in general EP. To see that let $T$ be the right shift, $\mathcal{K}=l^2(\mathbb{N})$, $\mathcal{L}=\{ 0\}$, $U=I_{\mathcal{K}}\oplus I_{\mathcal{L}}$, $A=I_{\mathcal{K}}$ and $B=P_{\overline{span}\left\{e_1, e_2, e_3, \ldots\right\}}$.
\end{remark}

\section{Factorizations of the form $T^*=ST$}

In this section we will characterize EP operators through factorizations of the form $T^*=ST$, of the form $T^{\dag}=ST$, of the form $T^*T=STT^*$, of the form $T^*T=TST^*$, of the form $T^{\dag}T=STT^{\dag}$ and of the form $T^{\dag}T=TST^{\dag}$.

We start with characterizations of EP operators via factorizations of the form $T^*=ST$.

\begin{proposition}
\label{prop3} Let $T\in\mathcal{B}(\mathcal{H})$ with closed range. Then the following are equivalent:
\begin{enumerate}
\item $T$ is EP.
\item There exists an isomorphism $V\in\mathcal{B}(\mathcal{H})$ such that $T^*=VT$.
\item There exists $N\in\mathcal{B}(\mathcal{H})$ injective such that $T^*=NT$.
\item There exist $S_1, S_2\in\mathcal{B}(\mathcal{H})$ such that $T^*=S_1T$ and $T=S_2T^*$.
\end{enumerate}
\end{proposition}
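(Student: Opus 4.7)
The plan is to establish the cycle $(1)\Rightarrow(2)\Rightarrow(3)\Rightarrow(1)$ among the first three clauses and then add the equivalence $(1)\Leftrightarrow(4)$. The only substantive step is producing the isomorphism $V$ in $(1)\Rightarrow(2)$; the rest is bookkeeping with kernels.

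For $(1)\Rightarrow(2)$ I would apply Theorem~\ref{prop1} to write $T=U(A\oplus 0)U^{*}$ with $U\in\mathcal{B}(\mathcal{K}_{1}\oplus\mathcal{L}_{1},\mathcal{H})$ unitary and $A\in\mathcal{B}(\mathcal{K}_{1})$ an isomorphism, so that $T^{*}=U(A^{*}\oplus 0)U^{*}$. The natural candidate is
$$V=U\bigl((A^{*}A^{-1})\oplus I_{\mathcal{L}_{1}}\bigr)U^{*}.$$
Since $U$ is unitary and the inner block-diagonal operator is an isomorphism of $\mathcal{K}_{1}\oplus\mathcal{L}_{1}$, $V$ is an isomorphism, and using $U^{*}U=I$ one directly computes $VT=U(A^{*}\oplus 0)U^{*}=T^{*}$. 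An intrinsic alternative that avoids the factorization is $V=T^{*}T^{\dag}+P_{\mathcal{N}(T)}$; its invertibility follows from Lemma~\ref{lem1} applied to both $T$ and $T^{*}$ together with the orthogonal decomposition $\mathcal{H}=\mathcal{R}(T)\mathop{\oplus}^{\perp}\mathcal{N}(T)$ available when $T$ is EP, and $VT=T^{*}$ follows from $T^{\dag}T=P_{\mathcal{R}(T^{*})}=P_{T}$ together with $T^{*}P_{T}=T^{*}$ (which in turn uses $\mathcal{N}(T)=\mathcal{N}(T^{*})$).

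The step $(2)\Rightarrow(3)$ is trivial since every isomorphism is injective. For $(3)\Rightarrow(1)$ I would note that if $T^{*}=NT$ with $N$ injective, then $Tx=0$ immediately gives $T^{*}x=0$, while conversely $T^{*}x=0$ forces $NTx=0$ and hence $Tx=0$ by injectivity of $N$. Both inclusions together yield $\mathcal{N}(T)=\mathcal{N}(T^{*})$, so $T$ is EP. It is worth stressing that this argument escapes the finite-dimensional pitfall mentioned after~(\ref{def}), precisely because the injectivity of $N$ delivers the nontrivial reverse inclusion for free.

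Finally, $(1)\Rightarrow(4)$ follows by taking the isomorphism $V$ produced in~(2) and setting $S_{1}=V$, $S_{2}=V^{-1}$, so that $T=V^{-1}T^{*}=S_{2}T^{*}$. The converse $(4)\Rightarrow(1)$ mirrors the kernel argument of $(3)\Rightarrow(1)$: from $T^{*}=S_{1}T$ we read off $\mathcal{N}(T)\subseteq\mathcal{N}(T^{*})$, and from $T=S_{2}T^{*}$ we read off $\mathcal{N}(T^{*})\subseteq\mathcal{N}(T)$, hence $T$ is EP. The main (and essentially only) obstacle throughout is the construction of the isomorphism $V$ in $(1)\Rightarrow(2)$; once Theorem~\ref{prop1} is invoked, that obstacle becomes a short algebraic manipulation.
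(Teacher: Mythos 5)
Your proposal is correct and follows essentially the same route as the paper: the same factorization $T=U(A\oplus 0)U^{*}$ from Theorem~\ref{prop1} with the same choice $V=U(A^{*}A^{-1}\oplus I)U^{*}$ for $(1)\Rightarrow(2)$, and the same kernel-inclusion arguments for $(3)\Rightarrow(1)$ and $(4)\Rightarrow(1)$. The only addition is your intrinsic formula $V=T^{*}T^{\dag}+P_{\mathcal{N}(T)}$, which is a valid alternative that bypasses the factorization but does not change the structure of the argument.
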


\begin{proof}
(1)$\Rightarrow$(2): By Theorem \ref{prop1}, $T=U(A\oplus 0)U^*$, with $U\in\mathcal{B}(\mathcal{K}\oplus\mathcal{L},\mathcal{H})$ unitary and $A\in\mathcal{B}(\mathcal{K})$ an isomorphism. If we take
$$V=U(A^*A^{-1}\oplus I_{\mathcal{L}})U^*:\mathcal{H}\rightarrow\mathcal{H},$$
then $V$ is an isomorphism with $ T^*=VT$.\\
(2)$\Rightarrow$(3) is obvious and (2)$\Rightarrow$(4) follows from $T^*=VT\Rightarrow T=V^{-1}T^*$.\\
(3)$\Rightarrow$(1): By $T^*=NT$ we get that $\mathcal{N}(T)\subseteq\mathcal{N}(T^*)$. On the other hand, by $T^*=NT$ and the injectivity of $N$ we get that $\mathcal{N}(T^*)\subseteq\mathcal{N}(T)$ and so $T$ is EP.\\
(4)$\Rightarrow$(1): By $T^*=S_1T$ we get that $\mathcal{N}(T)\subseteq\mathcal{N}(T^*)$
and by $T=S_2T^*$ we get that $\mathcal{N}(T^*)\subseteq\mathcal{N}(T)$ and so $T$ is EP.
\end{proof}

\begin{remark}
\label{prop3a}
(a) The equivalence of (1) and (2) was proved in \cite[Th\'{e}or\'{e}me 7]{Fildan}. The equivalence of (1), (2) and (4) also follows from the more general results concerning $C^*$-algebras in \cite[Theorem 10]{Harte} and in \cite[Theorem 3.1]{Koliha3}. We give our proof since it differs from the ones in those papers. Note that \cite[Theorem 10]{Harte} and \cite[Theorem 3.1]{Koliha3} contain other conditions equivalent to (2).\\
(b) Obviously the result also holds if in (2) in the place of $T^*=VT$ we put $T=VT^*$ or $T=T^*V$ or $T^*=TV$. Moreover the result also holds if in (3) in the place of $T^*=NT$ we put $T=NT^*$. On the other hand we cannot in the place of $T^*=NT$ put $T^*=TN$. To see that let $T$ be the left shift and $N=(T^*)^2$.\\
(c) If $T=U(A\oplus 0)U^*\in\mathcal{B}(\mathcal{H})$, with $U\in\mathcal{B}(\mathcal{K}\oplus\mathcal{L},\mathcal{H})$ unitary and $A\in\mathcal{B}(\mathcal{K})$ an isomorphism, then the solutions of the equation $T^*=XT$ are
$$X=U
\left[
\begin{array}{cc}
A^*A^{-1}&B\\
0&D
\end{array}
\right]U^*,\;\;B\in\mathcal{B}(\mathcal{L},\mathcal{K}),
D\in\mathcal{B}(\mathcal{L}).$$
For similar results about matrices
see \cite{Katz3}.
\end{remark}

We continue with characterizations of EP operators via factorizations of the form $T^{\dag}=ST$.

\begin{proposition}
\label{prop3b} Let $T\in\mathcal{B}(\mathcal{H})$ with closed range. Then the following are equivalent:
\begin{enumerate}
\item $T$ is EP.
\item There exists an isomorphism $V\in\mathcal{B}(\mathcal{H})$ such that $T^{\dag}=VT=TV$.
\item There exists $N\in\mathcal{B}(\mathcal{H})$ injective such that $T^{\dag}=NT$.
\item There exist $S_1, S_2\in\mathcal{B}(\mathcal{H})$ such that $T^{\dag}=S_1T$ and $T=S_2T^{\dag}$.
\end{enumerate}
\end{proposition}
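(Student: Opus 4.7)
The plan is to mirror the structure of Proposition \ref{prop3}, relying on two additional ingredients: the identity $\mathcal{N}(T^{\dag})=\mathcal{N}(T^*)$ noted in the preliminaries, and the explicit form $T^{\dag}=U(A^{-1}\oplus 0)U^*$ coming from Remark \ref{prop1a}(b).

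For (1)$\Rightarrow$(2), I would first invoke Theorem \ref{prop1} to write $T=U(A\oplus 0)U^*$ with $U\in\mathcal{B}(\mathcal{K}\oplus\mathcal{L},\mathcal{H})$ unitary and $A\in\mathcal{B}(\mathcal{K})$ an isomorphism, so that $T^{\dag}=U(A^{-1}\oplus 0)U^*$. The candidate is $V=U(A^{-2}\oplus I_{\mathcal{L}})U^*$, which is visibly an isomorphism; a routine $2\times 2$ block computation then verifies both $VT=T^{\dag}$ and $TV=T^{\dag}$ simultaneously, because $A^{-2}$ commutes with $A$. This is where the argument diverges from Proposition \ref{prop3}: now a two-sided intertwining condition must be met by a single $V$, so the ``twist'' between $A^{-1}$ and $A$ has to be symmetric, which forces the top-left block to be $A^{-2}$ rather than the one-sided choice $A^{*}A^{-1}$ used before. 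The implications (2)$\Rightarrow$(3) (take $N=V$) and (2)$\Rightarrow$(4) (take $S_1=V$ and $S_2=V^{-1}$, using $T=V^{-1}T^{\dag}$) are then immediate.

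For (3)$\Rightarrow$(1), the relation $T^{\dag}=NT$ immediately yields $\mathcal{N}(T)\subseteq\mathcal{N}(T^{\dag})=\mathcal{N}(T^*)$, while applying the injectivity of $N$ to $NTx=T^{\dag}x=0$ gives $\mathcal{N}(T^*)=\mathcal{N}(T^{\dag})\subseteq\mathcal{N}(T)$, and hence $T$ is EP. The implication (4)$\Rightarrow$(1) is parallel: the factorization $T^{\dag}=S_1T$ delivers $\mathcal{N}(T)\subseteq\mathcal{N}(T^*)$ and $T=S_2T^{\dag}$ delivers the reverse inclusion, again via $\mathcal{N}(T^{\dag})=\mathcal{N}(T^*)$. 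The only substantive step is thus the construction of $V$ in (1)$\Rightarrow$(2); everything else is a straightforward adaptation of the proof of Proposition \ref{prop3}, replacing $T^*$ by $T^{\dag}$ and using the kernel identification at each kernel-chasing step.
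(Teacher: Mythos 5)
Your proposal is correct and follows essentially the same route as the paper: the authors likewise obtain $T=U(A\oplus 0)U^*$ from Theorem \ref{prop1}, take $V=U(A^{-2}\oplus I_{\mathcal{L}})U^*$ via Remark \ref{prop1a}(b) to get $T^{\dag}=VT=TV$, and dispatch the remaining implications exactly as in Proposition \ref{prop3} using $\mathcal{N}(T^{\dag})=\mathcal{N}(T^*)$.
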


\begin{proof}
(1)$\Rightarrow$(2): By Theorem \ref{prop1}, $T=U(A\oplus 0)U^*$, with $U\in\mathcal{B}(\mathcal{K}\oplus\mathcal{L},\mathcal{H})$ unitary and $A\in\mathcal{B}(\mathcal{K})$ an isomorphism. Using Remark \ref{prop1a}(b) we get that if
$$
V=U(A^{-2}\oplus
I_{\mathcal{L}})U^*:\mathcal{H}\rightarrow\mathcal{H},$$
then $V$ is an isomorphism with $T^{\dag}=VT=TV$.\\
The rest follows in a manner similar to the proof of Proposition \ref{prop3}, since $\mathcal{N}(T^{\dag})=\mathcal{N}(T^*)$.
\end{proof}

\begin{remark}
\label{prop3c}
(a) The equivalence of (1) and (4) also follows from the more general results concerning $C^*$-algebras in \cite[Theorem 3.1]{Koliha3}. Moreover the equivalence of (1), (2) and (4) also follows from the results concerning unital Banach algebras in \cite[Theorem 18]{Boasso} (see also \cite[Theorem 16]{Boasso}).\\
(b) In general it is not true that if $T$ is an EP operator and $V$ is an isomorphism such that $T^{\dag}=VT$, then $T^{\dag}=TV$. To see that let
$$
T=\left[
\begin{array}{cc}
1&0\\
0&0
\end{array}
\right] \text{ and }
V=\left[
\begin{array}{cc}
1&1\\
0&1
\end{array}
\right].
$$
(c) If $T=U(A\oplus 0)U^*\in\mathcal{B}(\mathcal{H})$, with $U\in\mathcal{B}(\mathcal{K}\oplus\mathcal{L},\mathcal{H})$ unitary and $A\in\mathcal{B}(\mathcal{K})$ an isomorphism, then the solutions of the equation $T^{\dag}=XT$ are
$$X=U
\left[
\begin{array}{cc}
A^{-2}&B\\
0&D
\end{array}
\right]U^*,\;\;B\in\mathcal{B}(\mathcal{L},\mathcal{K}),
D\in\mathcal{B}(\mathcal{L}),$$
and the solutions of the equation $T^{\dag}=XT=TX$ are
$$X=U
\left[
\begin{array}{cc}
A^{-2}&0\\
0&D
\end{array}
\right]U^*,\;\;D\in\mathcal{B}(\mathcal{L}).$$
\end{remark}

We finish this section with two characterizations of EP operators through factorizations of the form $T^*T=STT^*$ and $T^*T=TST^*$.

\begin{proposition}
\label{prop4} Let $T\in\mathcal{B}(\mathcal{H})$ with closed range. Then the following are equivalent:
\begin{enumerate}
\item $T$ is EP.
\item There exists an isomorphism $V\in\mathcal{B}(\mathcal{H})$ such that $T^*T=VTT^*$.
\item There exists $N\in\mathcal{B}(\mathcal{H})$ injective such that $T^*T=NTT^*$.
\item There exist $S_1, S_2\in\mathcal{B}(\mathcal{H})$ such that $T^*T=S_1TT^*$ and $TT^*=S_2T^*T$.
\end{enumerate}
\end{proposition}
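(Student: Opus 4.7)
The plan is to mirror the structure of Proposition \ref{prop3}, exploiting the factorization from Theorem \ref{prop1} for the ``hard'' direction and using injectivity/kernel-chase arguments for the converses.

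For $(1)\Rightarrow(2)$, I would start by invoking Theorem \ref{prop1} to write $T=U(A\oplus 0)U^*$ with $U\in\mathcal{B}(\mathcal{K}\oplus\mathcal{L},\mathcal{H})$ unitary and $A\in\mathcal{B}(\mathcal{K})$ an isomorphism. Then
\[
T^*T=U(A^*A\oplus 0)U^*\quad\text{and}\quad TT^*=U(AA^*\oplus 0)U^*.
\]
Since $A$ is an isomorphism, both $A^*A$ and $AA^*$ are isomorphisms on $\mathcal{K}$. The natural choice is to set
\[
V=U\bigl(A^*A(AA^*)^{-1}\oplus I_{\mathcal{L}}\bigr)U^*\in\mathcal{B}(\mathcal{H}).
\]
This $V$ is a composition of isomorphisms (recall that $U$ being unitary gives $U^{-1}=U^*$), hence an isomorphism on $\mathcal{H}$, and a direct computation yields $VTT^*=T^*T$.

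The implications $(2)\Rightarrow(3)$ and $(2)\Rightarrow(4)$ are essentially free: isomorphisms are injective, and from $T^*T=VTT^*$ we immediately get $TT^*=V^{-1}T^*T$, so both $S_1:=V$ and $S_2:=V^{-1}$ work in $(4)$.

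The returns to $(1)$ are kernel-chases modelled on the proof of Proposition \ref{prop3}, using $\mathcal{N}(T^*T)=\mathcal{N}(T)$ and $\mathcal{N}(TT^*)=\mathcal{N}(T^*)$. For $(3)\Rightarrow(1)$, the relation $T^*T=NTT^*$ gives at once $\mathcal{N}(TT^*)\subseteq\mathcal{N}(T^*T)$, i.e.\ $\mathcal{N}(T^*)\subseteq\mathcal{N}(T)$; for the reverse inclusion, if $T^*Tx=0$ then $NTT^*x=0$ and, since $N$ is injective, $TT^*x=0$, whence $\mathcal{N}(T)\subseteq\mathcal{N}(T^*)$. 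For $(4)\Rightarrow(1)$, each of the two factorizations directly yields one of the two inclusions between $\mathcal{N}(T)$ and $\mathcal{N}(T^*)$.

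The only step requiring genuine thought is the construction of $V$ in $(1)\Rightarrow(2)$; the rest is bookkeeping. I do not anticipate an obstacle, but I would double-check that the ``middle'' block $A^*A(AA^*)^{-1}$ is bounded and boundedly invertible (it is, since $A$ is an isomorphism between Hilbert spaces), so that $V$ is genuinely an isomorphism on all of $\mathcal{H}$ rather than merely on $\mathcal{R}(T)$.
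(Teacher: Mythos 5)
Your proposal is correct and follows essentially the same route as the paper: the same factorization $T=U(A\oplus 0)U^*$ from Theorem \ref{prop1}, with $V=U\bigl(A^*A(AA^*)^{-1}\oplus I_{\mathcal{L}}\bigr)U^*$ coinciding with the paper's choice $U\bigl(A^*A(A^*)^{-1}A^{-1}\oplus I_{\mathcal{L}}\bigr)U^*$, and the same kernel-chasing arguments (via $\mathcal{N}(T^*T)=\mathcal{N}(T)$ and $\mathcal{N}(TT^*)=\mathcal{N}(T^*)$) for the converses, modelled on Proposition \ref{prop3}.
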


\begin{proof}
(1)$\Rightarrow$(2): By Theorem \ref{prop1}, $T=U(A\oplus 0)U^*$, with $U\in\mathcal{B}(\mathcal{K}\oplus\mathcal{L},\mathcal{H})$ unitary and $A\in\mathcal{B}(\mathcal{K})$ an isomorphism. If we take
$$V=U(A^*A(A^*)^{-1}A^{-1}\oplus I_{\mathcal{L}})U^*:\mathcal{H}\rightarrow\mathcal{H},$$
then $V$ is an isomorphism with $ T^*T=VTT^*$.\\
The rest follows in a manner similar to the proof of Proposition \ref{prop3}, since $\mathcal{N}(T^*T)=\mathcal{N}(T)$ and $\mathcal{N}(TT^*)=\mathcal{N}(T^*)$.
\end{proof}

\begin{remark}
\label{prop4a}
(a) If in (3) we put $T^*T=TT^*N$ in the place of $T^*T=NTT^*$, then $T$ is not in general EP. To see that let $L$ be the left shift and $R$ be the right shift, $T=L\oplus 0$ and
$$
N=\left[
\begin{array}{cc}
P_{\overline{\mathrm{span}}\{ e_1, e_2, \ldots\}}&0\\
P_{{\mathrm{span}}\{ e_0\}}&R
\end{array}
\right].
$$
(b) If $T=U(A\oplus 0)U^*\in\mathcal{B}(\mathcal{H})$, with $U\in\mathcal{B}(\mathcal{K}\oplus \mathcal{L},\mathcal{H})$ unitary and $A\in\mathcal{B}(\mathcal{K})$ an isomorphism, then the solutions of the equation $T^*T=XTT^*$ are
$$X=U
\left[
\begin{array}{cc}
A^*A(A^*)^{-1}A^{-1}&B\\
0&D
\end{array}
\right]U^*,\;\;B\in\mathcal{B}(\mathcal{L},\mathcal{K}),
D\in\mathcal{B}(\mathcal{L}).$$
\end{remark}

Using Theorem \ref{prop1}, Remark \ref{prop1a}(b), $\mathcal{N}(T^{\dag}T)=\mathcal{N}(T)$ and $\mathcal{N}(TT^{\dag})=\mathcal{N}(T^*)$ we can show that the results of the previous proposition also hold if we replace $T^*T$ with $T^{\dag}T$ and $TT^*$ with $TT^{\dag}$ and if we just replace $T^*T$ with $T^{\dag}T$ or $TT^*$ with $TT^{\dag}$.

\begin{proposition}
\label{prop5} Let $T\in\mathcal{B}(\mathcal{H})$ with closed range. Then the following are equivalent:
\begin{enumerate}
\item $T$ is EP.
\item There exists an isomorphism $V\in\mathcal{B}(\mathcal{H})$ such that $T^*T=TV^*VT^*$.
\item There exists $N\in\mathcal{B}(\mathcal{H})$ injective such that $T^*T=TN^*NT^*$.
\item There exist $S_1, S_2\in\mathcal{B}(\mathcal{H})$ such that $T^*T=TS_1T^*$ and $TT^*=T^*S_2T$.
\end{enumerate}
\end{proposition}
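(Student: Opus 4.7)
The plan is to establish the cycle $(1)\Rightarrow(2)\Rightarrow(3)\Rightarrow(1)$ together with $(1)\Rightarrow(4)\Rightarrow(1)$; the implication $(2)\Rightarrow(3)$ is immediate since every isomorphism is injective, so together these give the full equivalence.

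For $(1)\Rightarrow(2)$ I would invoke Theorem \ref{prop1} to write $T=U(A\oplus 0)U^*$ with $U\in\mathcal{B}(\mathcal{K}\oplus\mathcal{L},\mathcal{H})$ unitary and $A\in\mathcal{B}(\mathcal{K})$ an isomorphism, and look for $V$ of the form $V=U(X\oplus I_{\mathcal{L}})U^*$. A direct block computation reduces $TV^*VT^*=T^*T$ to $AX^*XA^*=A^*A$, i.e.\ $X^*X=A^{-1}(A^*A)(A^*)^{-1}$. The right-hand side equals the congruence $A^{-1}(A^*A)(A^{-1})^*$ of the positive isomorphism $A^*A$, so it is itself a positive isomorphism, and taking for instance $X=(A^*A)^{1/2}(A^*)^{-1}$ works explicitly; then $V$ is an isomorphism on $\mathcal{H}$. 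Applying the same construction to the EP operator $T^*$ produces an isomorphism $W$ satisfying $TT^*=T^*W^*WT$, so setting $S_1=V^*V$ and $S_2=W^*W$ simultaneously proves $(1)\Rightarrow(4)$.

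For $(3)\Rightarrow(1)$ the key identity is
\[
\|Tx\|^2=\langle T^*Tx,x\rangle=\langle TN^*NT^*x,x\rangle=\|NT^*x\|^2
\]
for every $x\in\mathcal{H}$. If $x\in\mathcal{N}(T)$, this forces $NT^*x=0$ and hence $T^*x=0$ by the injectivity of $N$; if $x\in\mathcal{N}(T^*)$, it forces $\|Tx\|=0$. Thus $\mathcal{N}(T)=\mathcal{N}(T^*)$ and $T$ is EP.

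Finally, $(4)\Rightarrow(1)$ is a routine kernel chase using $\mathcal{N}(T^*T)=\mathcal{N}(T)$ and $\mathcal{N}(TT^*)=\mathcal{N}(T^*)$: for $x\in\mathcal{N}(T^*)$, the identity $T^*T=TS_1T^*$ gives $T^*Tx=0$ and hence $Tx=0$; for $x\in\mathcal{N}(T)$, the identity $TT^*=T^*S_2T$ gives $TT^*x=0$ and hence $T^*x=0$. The only non-routine ingredient in the whole argument is the positive-square-root construction of $X$ in $(1)\Rightarrow(2)$; all remaining steps parallel the proofs of Propositions \ref{prop3} and \ref{prop4}.
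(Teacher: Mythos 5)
Your argument is correct, and the implications $(2)\Rightarrow(3)$, $(3)\Rightarrow(1)$ and $(4)\Rightarrow(1)$ coincide with the paper's: your norm identity $\|Tx\|^2=\|NT^*x\|^2$ is exactly the unwinding of the kernel facts $\mathcal{N}(T^*T)=\mathcal{N}(T)$ and $\mathcal{N}(TN^*NT^*)=\mathcal{N}(NT^*)$ that the paper invokes, and the kernel chase for $(4)\Rightarrow(1)$ is identical. The one place you genuinely diverge is $(1)\Rightarrow(2)$ (and hence $(1)\Rightarrow(4)$). The paper never returns to the block decomposition: it quotes Proposition \ref{prop3} in the form $T=VT^*$ with $V$ an isomorphism (cf.\ Remark \ref{prop3a}(b)), takes adjoints to get $T^*=TV^*$, and concludes $T^*T=(TV^*)(VT^*)=TV^*VT^*$ in one line; applying the same to $T^*$ yields $(4)$. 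You instead go back to Theorem \ref{prop1} and solve $AX^*XA^*=A^*A$ explicitly via the positive square root, taking $X=(A^*A)^{1/2}(A^*)^{-1}$ and $V=U(X\oplus I_{\mathcal{L}})U^*$. Your computation checks out ($X^*X=A^{-1}A^*A(A^*)^{-1}$ is a congruence of a positive isomorphism, so $X$ and hence $V$ are isomorphisms), and in fact your $V^*V$ agrees with the $V^*V$ produced by the paper's choice, since $A^{-1}A^*A(A^*)^{-1}=\bigl(A(A^*)^{-1}\bigr)^*\bigl(A(A^*)^{-1}\bigr)$. What the paper's route buys is brevity and the reuse of an already-proved factorization; what yours buys is an explicit positive $S_1=V^*V$ without appealing to Proposition \ref{prop3}, at the cost of the square-root construction. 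Both are valid.
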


\begin{proof}
(1)$\Rightarrow$(2): By Proposition \ref{prop3}, there exists an isomorphism $V\in\mathcal{B}(\mathcal{H})$ such that $T=VT^*$. Hence $T^*T=TV^*VT^*$.\\
(2)$\Rightarrow$(3) is obvious.\\
(3)$\Rightarrow$(1): By $T^*T=TN^*NT^*$ and $\mathcal{N}(T^*T)=\mathcal{N}(T)$ we get $\mathcal{N}(T^*)\subseteq\mathcal{N}(T)$. By $T^*T=TN^*NT^*$, the injectivity of $N$ and $\mathcal{N}(TN^*NT^*)=\mathcal{N}(NT^*)$ we get $\mathcal{N}(T)\subseteq\mathcal{N}(T^*)$. So $T$ is EP.\\
(1)$\Rightarrow$(4): Since (1)$\Leftrightarrow$(2) and $T$ is EP if and only if $T^*$ is EP, there exist isomorphisms $V_1$ and $V_2$ such that $T^*T=TV_1^*V_1T^*$ and $TT^*=T^*V_2^*V_2T$.\\
(4)$\Rightarrow$(1): By $T^*T=TS_1T^*$ and $\mathcal{N}(T^*T)=\mathcal{N}(T)$ we get $\mathcal{N}(T^*)\subseteq\mathcal{N}(T)$. By $TT^*=T^*S_2T$ and $\mathcal{N}(TT^*)=\mathcal{N}(T^*)$ we get $\mathcal{N}(T)\subseteq\mathcal{N}(T^*)$. Hence $T$ is EP.
\end{proof}

\begin{remark}
\label{prop5a}
(a) If we assume that there exists $V\in\mathcal{B}(\mathcal{H})$ unitary such that $T^*T=TVT^*$, then $T$ is not in general EP. To see that let $T$ be the left shift and $V:l^2(\mathbb{N})\rightarrow l^2(\mathbb{N})$, with
$$
V\left(\left\{ x_n\right\}_{n\in\mathbb{N}}\right)=\{ x_1, x_0,
x_2, x_3, \ldots\},\text{ for all }\left\{
x_n\right\}_{n\in\mathbb{N}}\in l^2(\mathbb{N}).
$$
(b) If $T=U(A\oplus 0)U^*\in\mathcal{B}(\mathcal{H})$, with $U\in\mathcal{B}(\mathcal{K}\oplus \mathcal{L},\mathcal{H})$ unitary and $A\in\mathcal{B}(\mathcal{K})$ an isomorphism, then the solutions of the equation $T^*T=TXT^*$ are
$$X=U
\left[
\begin{array}{cc}
A^{-1}A^*A(A^*)^{-1}&B\\
C&D
\end{array}
\right]U^*,\;\;B\in\mathcal{B}(\mathcal{L},\mathcal{K}), C\in\mathcal{B}(\mathcal{K},\mathcal{L}), D\in\mathcal{B}(\mathcal{L}).$$
\end{remark}

Using $\mathcal{N}(T^{\dag})=\mathcal{N}(T^*)$, $\mathcal{N}(T^{\dag}T)=\mathcal{N}(T)$ and $\mathcal{N}(TT^{\dag})=\mathcal{N}(T^*)$ we can show that the results of the previous proposition also hold if we replace $T^*$ with $T^{\dag}$.

\section{Factorizations of the form $T=BC$}

In this section we will discuss characterizations of EP operators through factorizations of the form $T=BC$, with $B$ injective with closed range and $C$ surjective.

If $T\in\mathcal{B}(\mathcal{H})$ has closed range, then
$$
T=T\left|_{\mathcal{R}(T^*)}\right.P_{T^*},
$$
with $T\left|_{\mathcal{R}(T^*)}\right.:\mathcal{R}(T^*)\rightarrow\mathcal{H}$ injective with closed range and $P_{T^*}:\mathcal{H}\rightarrow\mathcal{R}(T^*)$ surjective. Thus if $T\in\mathcal{B}(\mathcal{H})$ has closed range, then there exist a Hilbert space $\mathcal{K}$, $B\in\mathcal{B}(\mathcal{K},\mathcal{H})$ injective with closed range and $C\in\mathcal{B}(\mathcal{H},\mathcal{K})$ surjective with $T=BC$. Note that, for any isomorphism $U\in\mathcal{B}(\mathcal{K})$, $BU\in\mathcal{B}(\mathcal{K},\mathcal{H})$ is injective with closed range, $U^{-1}C\in\mathcal{B}(\mathcal{H},\mathcal{K})$ is surjective and $(BU)(U^{-1}C)=T$ and so factorizations of that form are not unique. It is easy to see that $\mathcal{R}(T)=\mathcal{R}(B)$ and $\mathcal{N}(T)=\mathcal{N}(C)$. Moreover if $T=BC$, then obviously $T^*=C^*B^*$, with $C^*$ injective with closed range and $B^*$ surjective and $\mathcal{R}(T^*)=\mathcal{R}(C^*)$ and $\mathcal{N}(T^*)=\mathcal{N}(B^*)$.

Assume that $T=BC$, with $B$ injective with closed range and $C$ surjective. It is easy to see that, since $B$ is injective with closed range, $B^{\dag}=(B^*B)^{-1}B^*$ and $B^{\dag}B=I_{\mathcal{K}}$ and that, since $C$ is surjective, $C^{\dag}=C^*(CC^*)^{-1}$ and $CC^{\dag}=I_{\mathcal{K}}$. Moreover
$$
T^{\dag}=C^{\dag}B^{\dag}=C^*(CC^*)^{-1}(B^*B)^{-1}B^*.
$$

From now on we consider $T\in\mathcal{B}(\mathcal{H})$ with closed range, $B\in\mathcal{B}(\mathcal{K},\mathcal{H})$ injective with closed range and $C\in\mathcal{B}(\mathcal{H},\mathcal{K})$ surjective with $T=BC$. In the following four theorems we characterize EP operators through their factorizations of the form $T=BC$.

\begin{theorem}
\label{theoremfact1}
The following are equivalent:
\begin{enumerate}
\item $T$ is EP.
\item $BB^{\dag}=C^{\dag}C$.
\item $\mathcal{N}(B^*)=\mathcal{N}(C)$.
\end{enumerate}
\end{theorem}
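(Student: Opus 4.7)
The plan is to identify both conditions (2) and (3) with the geometric characterization of EP given in (\ref{def}), namely that $\mathcal{R}(T)\displaystyle{\mathop{\oplus}^{\perp}}\mathcal{N}(T)=\mathcal{H}$, or equivalently $\mathcal{R}(T)=\mathcal{N}(T)^{\perp}$. Since $B$ is injective with closed range and $C$ is surjective, the preliminary observations recalled in the excerpt give $BB^{\dag}=P_{\mathcal{R}(B)}$ and $C^{\dag}C=P_{\mathcal{R}(C^*)}$. Combined with the facts $\mathcal{R}(B)=\mathcal{R}(T)$ and $\mathcal{N}(C)=\mathcal{N}(T)$ observed just before the theorem, this reduces everything to a statement about two projections.

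For (1)$\Leftrightarrow$(3) I would simply note that $\mathcal{N}(B^*)=\mathcal{R}(B)^{\perp}=\mathcal{R}(T)^{\perp}$ and $\mathcal{N}(C)=\mathcal{N}(T)$, so condition (3) is literally the identity $\mathcal{R}(T)^{\perp}=\mathcal{N}(T)$, which, together with $\mathcal{R}(T)$ being closed, is exactly the geometric characterization of EP from (\ref{def}).

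For (1)$\Leftrightarrow$(2) I would rewrite both sides as orthogonal projections: $BB^{\dag}=P_{\mathcal{R}(T)}$ and $C^{\dag}C=P_{\mathcal{R}(C^*)}=P_{\mathcal{N}(C)^{\perp}}=P_{\mathcal{N}(T)^{\perp}}$. Two orthogonal projections are equal if and only if their ranges coincide, so condition (2) is equivalent to $\mathcal{R}(T)=\mathcal{N}(T)^{\perp}$, which is again the same EP criterion.

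There is no real obstacle here beyond bookkeeping; the only point to be careful about is invoking the hypotheses on $B$ and $C$ correctly (injectivity with closed range for $B$ gives $\mathcal{R}(B^*)$ dense, hence $\mathcal{N}(B^*)=\mathcal{R}(B)^{\perp}$ without needing closedness of $\mathcal{R}(B^*)$; surjectivity of $C$ gives $\mathcal{R}(C^*)$ closed and equal to $\mathcal{N}(C)^{\perp}$, which is what is needed to identify $C^{\dag}C$ with $P_{\mathcal{N}(T)^{\perp}}$). Hence the proof reduces to two short chains of equivalences based on the formulas for the Moore-Penrose inverse recorded in Section~2.
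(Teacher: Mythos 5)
Your proof is correct and follows essentially the same route as the paper's: both rest on identifying $BB^{\dag}=P_B=P_{\mathcal{R}(T)}$ and $C^{\dag}C=P_{C^*}=P_{\mathcal{N}(T)^{\perp}}$ and then invoking the characterizations of EP collected in (\ref{def}). The only cosmetic difference is that the paper obtains (1)$\Leftrightarrow$(2) from the commutation criterion $TT^{\dag}=T^{\dag}T$ together with $T^{\dag}=C^{\dag}B^{\dag}$, $B^{\dag}B=I_{\mathcal{K}}$ and $CC^{\dag}=I_{\mathcal{K}}$, whereas you work with the orthogonal-decomposition criterion throughout; both are immediate consequences of (\ref{def}).
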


\begin{proof}
(1)$\Leftrightarrow$(2): Since $T^{\dag}=C^{\dag}B^{\dag}$, $CC^{\dag}=I_{\mathcal{K}}$ and $B^{\dag}B=I_{\mathcal{K}}$, we have that
$$
\begin{array}{rcl}
T \text{ EP } &\Leftrightarrow&
TT^{\dag}=T^{\dag}T\\
&\Leftrightarrow& BB^{\dag}=C^{\dag}C.
\end{array}
$$
(2)$\Leftrightarrow$(3) follows from $BB^{\dag}=P_B$ and $C^{\dag}C=P_{C^*}$.
\end{proof}

For the proof of the next theorem we will need the following lemma the proof of which is elementary and is omitted.
\begin{lemma}
\label{lemmafact}
Let $S\in\mathcal{B}(\mathcal{H})$ and $A\in\mathcal{B}(\mathcal{H})$ with closed range. Then
$$
S=SP_A\Leftrightarrow\mathcal{N}(A^*)\subseteq\mathcal{N}(S)\text{ and }S=P_AS\Leftrightarrow\mathcal{N}(A^*)\subseteq\mathcal{N}(S^*).
$$
\end{lemma}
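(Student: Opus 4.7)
The plan is to prove the two biconditionals separately, using the orthogonal decomposition $\mathcal{H}=\mathcal{R}(A)\displaystyle{\mathop{\oplus}^{\perp}}\mathcal{N}(A^*)$, which is available because $A$ has closed range. Under this decomposition, $P_A$ is the projection onto the first summand and $I-P_A$ is the orthogonal projection onto the second summand $\mathcal{N}(A^*)$, so in particular $\mathcal{R}(I-P_A)=\mathcal{N}(A^*)$.

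For the first equivalence $S=SP_A \Leftrightarrow \mathcal{N}(A^*)\subseteq\mathcal{N}(S)$, I would rewrite the identity as $S(I-P_A)=0$, which says exactly that $S$ annihilates $\mathcal{R}(I-P_A)=\mathcal{N}(A^*)$. For the forward direction, pick $x\in\mathcal{N}(A^*)$ and use $P_Ax=0$ to get $Sx=SP_Ax=0$; for the converse, decompose an arbitrary $x\in\mathcal{H}$ as $x=P_Ax+(I-P_A)x$ and observe that $S$ kills the second piece by hypothesis, so $Sx=SP_Ax$.

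For the second equivalence $S=P_AS \Leftrightarrow \mathcal{N}(A^*)\subseteq\mathcal{N}(S^*)$, I would reduce to the first by taking adjoints. Since $P_A$ is self-adjoint, $S=P_AS$ is equivalent to $S^*=S^*P_A$, and applying the already-established first equivalence with $S^*$ in place of $S$ yields precisely $\mathcal{N}(A^*)\subseteq\mathcal{N}(S^*)$.

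There is no genuine obstacle: the statement really is elementary, as the authors indicate. The whole argument rests on two ingredients, the identification $\mathcal{R}(I-P_A)=\mathcal{N}(A^*)$ (which is a restatement of $\mathcal{R}(A)^{\perp}=\mathcal{N}(A^*)$ valid since $\mathcal{R}(A)$ is closed) together with the self-adjointness of the orthogonal projection $P_A$, which lets us pass between the two equivalences by adjunction.
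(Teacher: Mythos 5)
Your proof is correct, and it is the standard elementary argument the authors have in mind: the paper explicitly omits the proof of this lemma as elementary, so there is nothing to compare against. Both ingredients you isolate --- the identification $\mathcal{R}(I-P_A)=\mathcal{R}(A)^{\perp}=\mathcal{N}(A^*)$ (valid since $\mathcal{R}(A)$ is closed) and the reduction of the second equivalence to the first by taking adjoints using $P_A^*=P_A$ --- are exactly what is needed, and the verification of each direction is complete.
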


\begin{theorem}
\label{theoremfact2}
The following are equivalent:
\begin{enumerate}
\item $T$ is EP.
\item $B=C^{\dag}CB$ and $C=CBB^{\dag}$.
\item $B^{\dag}=B^{\dag}C^{\dag}C$ and $C=CBB^{\dag}$.
\item $B=C^{\dag}CB$ and $C^{\dag}=BB^{\dag}C^{\dag}$.
\item $B^{\dag}=B^{\dag}C^{\dag}C$ and $C^{\dag}=BB^{\dag}C^{\dag}$.
\end{enumerate}
\end{theorem}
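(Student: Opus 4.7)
The plan is to translate each identity occurring in (2)--(5) into one of the two set inclusions $\mathcal{N}(C)\subseteq\mathcal{N}(B^*)$ or $\mathcal{N}(B^*)\subseteq\mathcal{N}(C)$, and then invoke Theorem~\ref{theoremfact1} (specifically, condition (3) there) to finish. The key observation making this uniform is that $BB^{\dag}=P_B$ and $C^{\dag}C=P_{C^*}$, so every identity appearing in (2)--(5) has the shape $S=P_AS$ or $S=SP_A$ with $A\in\{B,C^*\}$ and $S\in\{B,B^{\dag},C,C^{\dag}\}$.

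Lemma~\ref{lemmafact} is stated for $S,A\in\mathcal{B}(\mathcal{H})$, but its elementary proof goes through unchanged when $S$ acts between possibly different Hilbert spaces, and that is what I would use. Applying it to each of the eight identities, together with the Moore--Penrose identities $\mathcal{N}(X^{\dag})=\mathcal{N}(X^*)$ and $(X^{\dag})^*=(X^*)^{\dag}$ for operators $X$ with closed range, yields the following translations: both $B=C^{\dag}CB$ and $B^{\dag}=B^{\dag}C^{\dag}C$ are equivalent to $\mathcal{N}(C)\subseteq\mathcal{N}(B^*)$, while both $C=CBB^{\dag}$ and $C^{\dag}=BB^{\dag}C^{\dag}$ are equivalent to $\mathcal{N}(B^*)\subseteq\mathcal{N}(C)$.

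Consequently, each pair of identities in (2), (3), (4), (5) amounts to the single condition $\mathcal{N}(B^*)=\mathcal{N}(C)$, which by Theorem~\ref{theoremfact1} is equivalent to $T$ being EP; in particular (2), (3), (4), (5) are pairwise equivalent and each is equivalent to (1). The main obstacle is not conceptual but bookkeeping: one must keep straight which side each projection sits on, and verify the kernel simplifications $\mathcal{N}(B^{\dag})=\mathcal{N}(B^*)$ and $\mathcal{N}((C^{\dag})^*)=\mathcal{N}((C^*)^{\dag})=\mathcal{N}(C)$. Both are routine consequences of the defining properties of the Moore--Penrose inverse recalled in the preliminaries.
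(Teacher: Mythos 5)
Your proposal is correct and is essentially identical to the paper's own proof: both translate each of the eight identities into $\mathcal{N}(C)\subseteq\mathcal{N}(B^*)$ or $\mathcal{N}(B^*)\subseteq\mathcal{N}(C)$ via Lemma~\ref{lemmafact}, $BB^{\dag}=P_B$, $C^{\dag}C=P_{C^*}$, $\mathcal{N}(B^{\dag})=\mathcal{N}(B^*)$ and $\mathcal{N}((C^{\dag})^*)=\mathcal{N}(C)$, and then invoke the equivalence of (1) and (3) in Theorem~\ref{theoremfact1}. Your remark that Lemma~\ref{lemmafact} must be applied to operators between different Hilbert spaces is a minor point the paper leaves implicit, and you handle it correctly.
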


\begin{proof}
By Lemma \ref{lemmafact}, $\mathcal{N}(B^{\dag})=\mathcal{N}(B^*)$ and $\mathcal{N}((C^{\dag})^*)=\mathcal{N}(C)$, we get that
$$
\begin{array}{c}
B=C^{\dag}CB\Leftrightarrow B=P_{C^*}B\Leftrightarrow\mathcal{N}(C)\subseteq\mathcal{N}(B^*),\\
C=CBB^{\dag}\Leftrightarrow C=CP_{B}\Leftrightarrow\mathcal{N}(B^*)\subseteq\mathcal{N}(C),\\
B^{\dag}=B^{\dag}C^{\dag}C\Leftrightarrow B^{\dag}=B^{\dag}P_{C^*}\Leftrightarrow\mathcal{N}(C)\subseteq\mathcal{N}(B^*),\\
C^{\dag}=BB^{\dag}C^{\dag}\Leftrightarrow C^{\dag}=P_{B}C^{\dag}\Leftrightarrow\mathcal{N}(B^*)\subseteq\mathcal{N}(C).
\end{array}
$$
Combining all the above with the equivalence of (1) and (3) in Theorem \ref{theoremfact1} we get the proof.
\end{proof}

\begin{remark}
\label{theoremfact2a}
If we only assume that one of the conditions in (2)-(5) holds, then $T$ is not in general EP. Moreover 
$$
B=C^{\dag}CB\text{ and }B^{\dag}=B^{\dag}C^{\dag}C
$$
or
$$
C=CBB^{\dag}\text{ and }C^{\dag}=BB^{\dag}C^{\dag}
$$
do not in general imply that $T$ is EP. For the first one let $T$ be the right shift, $B=T$ and $C=I$ and for the second one let $T$ be the left shift, $B=I$ and $C=T$.
\end{remark}

\begin{theorem}
\label{theoremfact3}
The following are equivalent:
\begin{enumerate}
\item $T$ is EP.
\item $T^*T=C^*B^*BCBB^{\dag}$ and $T^*T=C^*B^*C^{\dag}CBC$.
\item $TT^*=BCC^*B^*C^*(C^*)^{\dag}$ and $TT^*=BCBB^{\dag}C^*B^*$.
\item $T^*T=C^*B^*BCBB^{\dag}$ and $TT^*=BCC^*B^*C^*(C^*)^{\dag}$.
\item $T^*T=C^*B^*C^{\dag}CBC$ and $TT^*=BCBB^{\dag}C^*B^*$.
\end{enumerate}
\end{theorem}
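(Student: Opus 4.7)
My plan is to follow the template used in the proof of Theorem \ref{theoremfact2}: simplify each right-hand side in (2)--(5) using the identities $BB^{\dag}=P_B$, $C^{\dag}C=P_{C^*}$ and $C^*(C^*)^{\dag}=P_{C^*}$, then convert each resulting equation into a kernel inclusion between $\mathcal{N}(B^*)$ and $\mathcal{N}(C)$ via Lemma \ref{lemmafact}. Pairing up these inclusions will give $\mathcal{N}(B^*)=\mathcal{N}(C)$, which by Theorem \ref{theoremfact1} is equivalent to $T$ being EP; the forward implications (1) $\Rightarrow$ (2)--(5) are automatic from the same equivalences.

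Concretely, I would first record the simplifications
\begin{align*}
C^*B^*BCBB^{\dag} &= T^*TP_B, & BCC^*B^*C^*(C^*)^{\dag} &= TT^*P_{C^*},\\
C^*B^*C^{\dag}CBC &= T^*P_{C^*}T, & BCBB^{\dag}C^*B^* &= TP_BT^*.
\end{align*}
The two equations in the top row fit Lemma \ref{lemmafact} directly: with $S=T^*T$ and $A=B$ one gets $T^*T=T^*TP_B \Leftrightarrow \mathcal{N}(B^*)\subseteq \mathcal{N}(T^*T)=\mathcal{N}(C)$, and with $S=TT^*$ and $A=C^*$ one gets $TT^*=TT^*P_{C^*} \Leftrightarrow \mathcal{N}(C)\subseteq \mathcal{N}(TT^*)=\mathcal{N}(B^*)$.

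The two equations in the bottom row require one extra step. The identity $T^*T=T^*P_{C^*}T$ rearranges to $T^*P_{\mathcal{N}(C)}T=0$; since $T^*P_{\mathcal{N}(C)}T=(P_{\mathcal{N}(C)}T)^*(P_{\mathcal{N}(C)}T)$, this forces $P_{\mathcal{N}(C)}T=0$, equivalently $T=P_{C^*}T$. Then Lemma \ref{lemmafact} (with $S=T$, $A=C^*$) gives $\mathcal{N}(C)\subseteq \mathcal{N}(T^*)=\mathcal{N}(B^*)$. The symmetric manipulation on $TT^*=TP_BT^*$ yields $\mathcal{N}(B^*)\subseteq \mathcal{N}(C)$.

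With these four biconditionals in hand, each of (2)--(5) pairs one equation forcing $\mathcal{N}(B^*)\subseteq \mathcal{N}(C)$ with another forcing the reverse inclusion, so all four are equivalent to $\mathcal{N}(B^*)=\mathcal{N}(C)$ and hence to (1). The only non-routine part is the detour through the positivity of $T^*P_{\mathcal{N}(C)}T$ (and of $TP_{\mathcal{N}(B^*)}T^*$) needed to reduce the quadratic identities in the bottom row to a form where Lemma \ref{lemmafact} can be applied.
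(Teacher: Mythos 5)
Your proposal is correct and follows essentially the same route as the paper: the paper also reduces the first pair of identities to kernel inclusions (its direct argument via $\mathcal{N}(B^{\dag})=\mathcal{N}(B^*)$ is exactly the content of Lemma \ref{lemmafact} applied as you do), and for the sandwiched identities it uses the norm computation $\|Tx\|^2=\|P_{T^*}Tx\|^2\Rightarrow Tx=P_{T^*}Tx$, which is the same positivity argument you phrase as $(P_{\mathcal{N}(C)}T)^*(P_{\mathcal{N}(C)}T)=0\Rightarrow P_{\mathcal{N}(C)}T=0$. No gaps.
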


\begin{proof}
Using Theorem \ref{theoremfact2} we can show that (1) implies the rest.\\
For the converse we have that:\\
If $T^*T=C^*B^*BCBB^{\dag}$, then, using $\mathcal{N}(T^*)=\mathcal{N}(B^*)=\mathcal{N}(B^{\dag})$ and $\mathcal{N}(T^*T)=\mathcal{N}(T)$, we get that $\mathcal{N}(T^*)\subseteq\mathcal{N}(T)$
and so
$$
T^*T=C^*B^*BCBB^{\dag}\Rightarrow\mathcal{N}(T^*)\subseteq\mathcal{N}(T).
$$
Similarly we get that
$$
TT^*=BCC^*B^*C^*(C^*)^{\dag}\Rightarrow\mathcal{N}(T)\subseteq\mathcal{N}(T^*).
$$
Using $C^{\dag}C=P_{C^*}$ and $\mathcal{R}(T^*)=\mathcal{R}(C^*)$, we get that
$$
\begin{array}{rcl}
T^*T=C^*B^*C^{\dag}CBC
&\Rightarrow&
T^*T=T^*P_{T^*}T\\
&\Rightarrow&
\|Tx\|^2=\|P_{T^*}Tx\|^2,\text{ for all }x\in\mathcal{H}\\
&\Rightarrow&
Tx=P_{T^*}Tx,\text{ for all }x\in\mathcal{H}\\
&\Rightarrow&
\mathcal{R}(T)\subseteq\mathcal{R}(T^*)\\
&\Rightarrow& \mathcal{N}(T)\subseteq\mathcal{N}(T^*).
\end{array}
$$
Similarly we get that
$$
TT^*=BCBB^{\dag}C^*B^*\Rightarrow\mathcal{N}(T^*)\subseteq\mathcal{N}(T).
$$
Combining all the above we get the result.
\end{proof}

\begin{theorem}
\label{theoremfact4}
The following are equivalent:
\begin{enumerate}
\item $T$ is EP.
\item There exists an isomorphism $V\in\mathcal{B}(\mathcal{K})$ such that $C=VB^*$.
\item There exists $N\in\mathcal{B}(\mathcal{K})$ injective with closed range such that $C=NB^*$.
\item There exist $S_1, S_2\in\mathcal{B}(\mathcal{K})$ such that $C=S_1B^*$ and $B^*=S_2C$.
\end{enumerate}
\end{theorem}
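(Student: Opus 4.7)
My approach rests on the characterization from Theorem~\ref{theoremfact1} that $T$ is EP if and only if $\mathcal{N}(B^*)=\mathcal{N}(C)$. With this in hand, the easy implications are routine kernel chases analogous to those in Propositions~\ref{prop3} and \ref{prop3b}. For (3)$\Rightarrow$(1): if $C=NB^*$ with $N$ injective, then $B^*x=0$ forces $Cx=0$ and, conversely, $Cx=0$ forces $NB^*x=0$, hence $B^*x=0$. For (4)$\Rightarrow$(1): $C=S_1B^*$ gives $\mathcal{N}(B^*)\subseteq\mathcal{N}(C)$ and $B^*=S_2C$ gives $\mathcal{N}(C)\subseteq\mathcal{N}(B^*)$. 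The implications (2)$\Rightarrow$(3) and (2)$\Rightarrow$(4) are immediate, taking $N=V$ and $(S_1,S_2)=(V,V^{-1})$ respectively.

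For the main implication (1)$\Rightarrow$(2), I would define $V$ explicitly as
$$
V \;=\; C(B^*)^{\dag} \;=\; CB(B^*B)^{-1} \;\in\; \mathcal{B}(\mathcal{K}).
$$
To check $VB^*=C$, I compute $VB^*=CB(B^*B)^{-1}B^*=CP_{\mathcal{R}(B)}$. Because $T$ is EP, Theorem~\ref{theoremfact1} (or the equivalences in (\ref{def})) yields $\mathcal{R}(B)=\mathcal{R}(T)=\mathcal{R}(T^*)=\mathcal{R}(C^*)$, so $P_{\mathcal{R}(B)}=P_{\mathcal{R}(C^*)}=C^{\dag}C$, giving $VB^*=CC^{\dag}C=C$.

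The main obstacle is then to show that $V$ is an isomorphism of $\mathcal{K}$. My plan is to exploit the factorization $V=(CB)(B^*B)^{-1}$. The factor $B^*B$ is an isomorphism of $\mathcal{K}$ because $B$ is injective with closed range, so it suffices to prove that $CB\in\mathcal{B}(\mathcal{K})$ is an isomorphism. For this I would invoke Lemma~\ref{lem1}, which gives that $T|_{\mathcal{R}(T)}:\mathcal{R}(B)\to\mathcal{R}(B)$ is an isomorphism; denoting by $\tilde B:\mathcal{K}\to\mathcal{R}(B)$ the isomorphism obtained from $B$ by corestricting, a direct calculation
$$\tilde B^{-1}(T|_{\mathcal{R}(B)})\tilde B(k)=\tilde B^{-1}BC(Bk)=\tilde B^{-1}B(CBk)=CBk$$
shows $\tilde B^{-1}(T|_{\mathcal{R}(B)})\tilde B=CB$, so $CB$ is an isomorphism as a conjugate of one. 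Hence $V$ is a composition of two isomorphisms, completing the argument.
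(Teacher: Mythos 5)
Your proof is correct. The three easy implications coincide with the paper's: (2)$\Rightarrow$(3), (2)$\Rightarrow$(4) by the same choices of $N$ and $(S_1,S_2)$, and (3)$\Rightarrow$(1), (4)$\Rightarrow$(1) by the same kernel chases feeding into Theorem \ref{theoremfact1}. For the key implication (1)$\Rightarrow$(2) you construct, in effect, the very same operator as the paper: since $(B^*)^{\dag}=B(B^*B)^{-1}$ is exactly the inverse of the restriction $B^*|_{\mathcal{R}(B)}$ followed by the inclusion of $\mathcal{R}(B)$ into $\mathcal{H}$, your $V=C(B^*)^{\dag}$ agrees with the paper's $C|_{\mathcal{R}(C^*)}\left(B^*|_{\mathcal{R}(C^*)}\right)^{-1}$ once one uses $\mathcal{R}(B)=\mathcal{R}(C^*)$. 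What differs is the verification that $V$ is an isomorphism. The paper exhibits $V$ as a composition of two isomorphisms between $\mathcal{R}(C^*)$ and $\mathcal{K}$, obtained by restricting the surjections $C$ and $B^*$ to the orthogonal complement of their common kernel; this uses only $\mathcal{R}(B)=\mathcal{R}(C^*)$ and elementary facts about surjections with closed range. You instead factor $V=(CB)(B^*B)^{-1}$, note that $B^*B$ is invertible, and show $CB$ is similar to $T|_{\mathcal{R}(T)}$ via the corestriction $\tilde B$, so that Lemma \ref{lem1} delivers invertibility. Both arguments are sound; yours has the mild advantages of giving a closed formula $V=CB(B^*B)^{-1}$ in terms of the given data and of isolating the identity $\tilde B^{-1}(T|_{\mathcal{R}(T)})\tilde B=CB$, at the cost of invoking Lemma \ref{lem1}, which the paper's proof of this particular theorem does not need.
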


\begin{proof}
(1)$\Rightarrow$(2): Obviously
$$
C=C|_{\mathcal{R}(C^*)}P_{C^*}
$$
and
$$
B^*=B^*|_{\mathcal{R}(B)}P_{B}.
$$
Since $T$ is EP, by Theorem \ref{theoremfact1}, $\mathcal{R}(B)=\mathcal{R}(C^*)$ and so
$$
B^*=B^*|_{\mathcal{R}(C^*)}P_{C^*}.
$$
Since $B$ is injective with closed range, $B^*$ is surjective and so
$$
B^*|_{\mathcal{R}(C^*)}:\mathcal{R}(C^*)\rightarrow\mathcal{K}
$$
is an isomorphism. Thus
$$
\left(B^*|_{\mathcal{R}(C^*)}\right)^{-1}:\mathcal{K}\rightarrow\mathcal{R}(C^*)
$$
is an isomorphism. Moreover, since $C$ is surjective,
$$
C|_{\mathcal{R}(C^*)}:\mathcal{R}(C^*)\rightarrow\mathcal{K}
$$
is also an isomorphism. From what we said so far $V:\mathcal{K}\rightarrow\mathcal{K}$ with
$$
V=C|_{\mathcal{R}(C^*)}\left(B^*|_{\mathcal{R}(C^*)}\right)^{-1}
$$
is an isomorphism and we have
$$
VB^*=C|_{\mathcal{R}(C^*)}\left(B^*|_{\mathcal{R}(C^*)}\right)^{-1}B^*|_{\mathcal{R}(C^*)}P_{C^*}=C|_{\mathcal{R}(C^*)}P_{C^*}=C.
$$
(2)$\Rightarrow$(3) and (2)$\Rightarrow$(4) are obvious.\\
(3)$\Rightarrow$(1): By the injectivity of $N$ and $C=NB^*$ we get that $\mathcal{N}(C)=\mathcal{N}(B^*)$ which, by Theorem \ref{theoremfact1}, implies that $T$ is EP.\\
(4)$\Rightarrow$(2): From $C=S_1B^*$ we get that $\mathcal{N}(B^*)\subseteq\mathcal{N}(C)$ and from $B^*=S_1C$ we get that $\mathcal{N}(C)\subseteq\mathcal{N}(B^*)$. So, by Theorem \ref{theoremfact1}, $T$ is EP.
\end{proof}

\begin{remark}
\label{theoremfact4a}
The result is true if we replace in (2) $C=VB^*$ with $B^*=VC$ or $B^*=CV$ or $C=B^*V$. Moreover if in (3) we put $B^*=NC$ in the place of $C=NB^*$ the result is true. On the other hand if we replace $C=NB^*$ with $C=B^*N$, then $T$ is not in general EP. To see that let $T$ be the right shift, $B=T$ and $C=I$.
\end{remark}

Using the previous theorem we can also get more general characterizations of EP operators. For example: Let $T\in\mathcal{B}(\mathcal{H})$ with closed range and assume that there exist $B\in\mathcal{B}(\mathcal{K},\mathcal{H})$ injective with closed range and $C\in\mathcal{B}(\mathcal{H},\mathcal{K})$ with $T=BC$ and an isomorphism $V\in\mathcal{B}(\mathcal{K})$ such that $C=VB^*$. Since $B$ is injective with closed range, $B^*$ is surjective and so, by $C=VB^*$, $C$ is surjective. Thus, by the previous theorem, $T$ is EP.

\begin{Acknowledgments}
The authors wish to thank the referees for their suggestions which improved both the content and the presentation of this paper. They would also like to thank Prof. V. Rako\v{c}evi{\'c} for bringing \cite{Boasso} to their attention.
\end{Acknowledgments}


\end{document}